\numberwithin{equation}{section}
\newcommand{\R}{\mathbbm{R}}
\newcommand{\Q}{\mathbbm{Q}}
\newcommand{\1}{\mathbbm{1}}
\newcommand{\N}{\mathbbm{N}}
\newcommand{\M}{\mathbbm{M}}
\let\phi\varphi
\newcommand{\sB}{\mathcal{B}}
\newcommand{\sA}{\mathcal{A}}
\newcommand{\sP}{\mathcal{P}}
\newcommand{\sL}{\mathcal{L}}
\newcommand{\sU}{\mathcal{U}}
\newcommand{\sX}{\mathcal{X}}
\newcommand{\sS}{\mathcal{S}}
\newcommand{\sT}{\mathcal{T}}
\newcommand{\F}{\mathbb{F}}
\newcommand{\E}{\mathbb{E}}
\newcommand{\sF}{\mathcal{F}}
\newcommand{\Break}{\vspace{5mm}}
\DeclareMathOperator*{\argmax}{arg\,max}
\newtheorem{theorem}{Theorem}[section]
\newtheorem{definition}[theorem]{Definition}
\newtheorem{assumption}[theorem]{Assumption}
\newtheorem{lemma}[theorem]{Lemma}
\newtheorem{remark}[theorem]{Remark}
\newtheorem{proposition}[theorem]{Proposition}
\begin{document}
%Title Contents
\title{%\vspace{-1.25cm} 
Mean Field Games of Control and Cryptocurrency Mining
}

\author{
Nicol\'{a}s Garc\'{i}a\footnote{Department of Operations Research and Financial Engineering, Princeton University, Princeton, NJ, 08540, USA.}
\footnote{Acknowledges support from NSERC CGSD.}
\and
Ronnie Sircar\footnotemark[1]
\and
H. Mete Soner\footnotemark[1]\hspace{0.5em}\footnote{Partially supported by the National Science Foundation grant DMS 2406762.}
}

%Title Page
\maketitle

\begin{abstract}
This paper studies Mean Field Games (MFGs) in which agent dynamics are given by jump processes of controlled intensity, with mean-field interaction via the controls and affecting the jump intensities. We establish the existence of MFG equilibria in a general discrete-time setting, and prove a limit theorem as the time discretization goes to zero, establishing equilibria in the continuous-time setting for a class of MFGs of intensity control. This motivates numerical schemes that involve directly solving discrete-time games as opposed to coupled Hamilton-Jacobi-Bellman and Kolmogorov equations. As an example of the general theory, we consider cryptocurrency mining competition, modeled as an MFG both in continuous and discrete time, and illustrate the effectiveness of the discrete-time algorithm to solve it.  
\end{abstract}
%Table of contents page
\renewcommand{\baselinestretch}{0.5}\normalsize
\tableofcontents
\renewcommand{\baselinestretch}{1.0}\normalsize

\section{Introduction}% and Review of the Literature}

The study of Mean Field Games (MFGs) dates back to the foundational papers \cite{huang2006large} and \cite{lasry2007mean}, and we refer to \cite{carmona2018probabilistic} for a comprehensive review of the literature, most of which has focused on continuous-time models. There are comparatively few works treating discrete-time setups, of which we highlight \cite{gomes2010discrete}, which considers a finite state and horizon setup where agents control their transition probabilities, \cite{biswas2015mean} which allows for a Polish state space incorporating mean-field interactions only through the costs, and a discrete-time MFG with countable state space studied in \cite{adlakha2015equilibria}. Additionally, \cite{ni2015discrete}, \cite{moon2015discrete}, and \cite{nourian2013linear} consider linear dynamics in a variety of discrete-time settings, and \cite{saldi2018markov} analyzes the infinite-horizon discounted-cost problem with a Polish state space and MFG interaction via the states, among others.    

Motivated by the cryptocurrency mining MFG model in \cite{li2019mean}, we study a class of problems set both in discrete and continuous time in which the controlled dynamics follow a jump Markov process and the mean-field interaction is via the controls. The latter property complicates the continuous-time analysis due to the loss of regularity in the control  measure flow when compared to the state measure flow in the more typical setup involving mean-field interactions of state. MFGs with jump process dynamics in the continuous-time and finite state setting are considered in \cite{bayraktar2018analysis,gomes2013continuous}, where agents' controls are their transition probabilities and they interact via the empirical (joint) state measures. The techniques involved are primarily based on ODE and PDE methods related to the associated master equations. A probabilistic treatment of MFGs with jumps is given in \cite{benazzoli2020mean}, where relaxed controls are used in a weak formulation to provide general existence results for MFGs with jump-diffusion dynamics where both jump size and intensity are controlled, and where the mean-field interaction is limited to the states. In our analysis, we also utilize relaxed control techniques as developed in  \cite{lacker2015mean} for diffusion MFGs of state and \cite{djete2023mean} for diffusion MFGs of control. Our results complement \cite{bertucci2020mean,taghizadeh2020mean}, and directly apply to the cryptocurrency MFG model from \cite{li2019mean} for which we provide theoretical existence guarantees.  

We begin our analysis by considering discrete-time finite-horizon MFGs in Section \ref{sec:discrete}, in which we let the state, noise, and control spaces be arbitrary Polish, and consider general transition dynamics. Using the methodology from \cite{saldi2018markov} which in turn was inspired by \cite{jovanovic1988anonymous}, we characterize the MFG  equilibria as fixed points of a set-valued operator and establish fixed-point existence by way of Kakutani's theorem.   In comparison to \cite{saldi2018markov}, we allow for mean-field interaction via the controls, and prove existence in the finite-time horizon setting under a weaker growth assumption on the transition dynamics. 

In Section \ref{sec:cts}, we consider a concrete continuous-time MFG with state dynamics given by a jump process and prove that it arises as the limit of analogous discrete-time models.  We assume that the drift and the intensity coefficients do not depend on the state process, and the MFG interaction is via the controls affecting only the intensity but not the drift of the agent's jump processes. We consider this concrete setup in order to avoid routine but technical details which can be found in the classical approximations literature for stochastic optimal control including \cite{ethier2009markov,kushner2012weak,kushner2001numerical}.

In Section \ref{sec:crypto}, we apply our results from Sections \ref{sec:discrete} and \ref{sec:cts} to establish MFG existence for the cryptocurrency mining MFG model of \cite{li2019mean} as well as its discrete-time analogue.  We compute the discrete-time MFG using damped fixed point iterations, and reproduce the qualitative equilibrium behavior which was established  in \cite{li2019mean} using a  finite difference scheme for the associated coupled PDEs. We remark on uniqueness of equilibrium in Section \ref{sec:uniq}.

\textbf{Notation:} For an integer $d$, let $\sS^d$ denote the $d$-dimensional simplex, $[d] \coloneqq \{1,2,\cdots,d\}$, and define $\Delta_i : \R^d \to \R^d$ by $\Delta_i x \coloneqq (x_1 - x_i,x_2 - x_i,\cdots,x_d - x_i)$. Given a Polish space $\sX$, let $\sB(\sX)$ denote its Borel sets and let $\sP(\sX)$ denote the set of probability measures on $\sB(\sX)$.  We endow this space with the weak topology, i.e.,
$\mu_n \to \mu$ if $\lim_{n \to \infty} \int_{\sX} f d\mu_n = \int_{\sX} f d\mu$ for any continuous and bounded $f:\sX \to \R$. To emphasize convergence in the weak sense, we write $\mu_n \xrightarrow{\sL,n \to \infty} \mu$.
Let $\R_{\geq 0}$ denote the non-negative reals. For a random variable $X$, $\sL(X)$ denotes its distribution. For $\mu\in \sP(\R)$, we write $\overline{\mu} \coloneqq \int_{\sX}x\mu(dx)$, and $\delta_{\{a\}}$  denotes the Dirac measure located at $a$.

\section{Mean Field Games in Discrete Time}
\label{sec:discrete}
This section establishes the existence of  equilibria  for discrete-time finite-horizon MFGs where the mean-field interaction, which affects the dynamics and costs, is through both the controls and states. 
We motivate the problem by first introducing the $N$-player game.

\subsection{$N$-Player Game Formulation}
We begin with the following $N$-player setup where:
\begin{itemize}[itemsep=0.5pt]%[nolistsep]
\item The agents' state processes take values in a Polish space $\sX$ and evolve in discrete time steps $t = 0,1,2,\cdots,T$. We denote by $x_t^i$ the state of agent $i \in [N] \coloneqq \{1,2,\cdots,N \}$ at time $t$.
\item The initial states $x_0^i$ are i.i.d. drawn from a measure $\mu_0 \in \sP(\sX)$.
\item The action space $\sA$ is assumed to be Polish, $a_t^i \in \sA$ denotes the action of agent $i \in [N]$ at time $t$, and $e_t \coloneqq (e_t^c,e_t^s) \in \sP(\sA) \times \sP(\sX)$ denotes the empirical distribution of the agents' controls and states at time $t$.
\item The transition dynamics of each agent are given by a Markov transition kernel 
\begin{align*}
\rho : \sX \times \sA \times \sP(\sA) \times \sP(\sX) \to \sP(\sX) \quad \text{ so that } \quad
x_{t+1}^i \sim \rho(\cdot | x_t^i,a^i_t,e_t^c,e_t^s)
\end{align*}
for each $t = 0,1,2,\cdots,T-1$.
\end{itemize}
A control policy for player $i$ consists of a sequence $(\pi_t^i)_{t=0}^{T-1}$ of $\sP(\sA)$-valued random variables adapted to the filtration
\begin{align*}
\sF_0^i \coloneqq \sigma(x_0^i,e_0^s), \quad \sF_t^{i} \coloneqq \sigma\Big( \sF_{t-1} \cup \sigma (x_t^i,a_{t-1}^i,e_{t-1}^c,e_{t}^s ) \Big), \quad \text{ for all } t = 0,1,\cdots,T-1.
\end{align*}
Conditioned on $\sF_t$, the action $a_t^i$ of agent $i$ is drawn randomly (and independent of any other random quantity) from the distribution $\pi_t^i$ (i.e. $a_t^i \sim \pi_t^i$). 

The last step in the specification of the model is to define the optimality criterion, which is in terms of the  one-step running and terminal cost functions
\begin{align*}
c:\sX \times \sA \times \sP(\sA) \times \sP(\sX) \to [0,\infty), \qquad \phi:\sX \times \sP(\sA) \times \sP(\sX) \to [0,\infty).
\end{align*}
Fixing an $N$-tuple of control policies $\pi^{(N)} \coloneqq (\pi^{N,1},\pi^{N,2},\cdots,\pi^{N,N})$, the $i$-th agent incurs a cost
\begin{align*}
J^{i}(\pi^{(N)}) \coloneqq J^{i}(\pi^{N,i},\pi^{N,-i}) \coloneqq \E^{\pi^{(N)}} \Big[ \sum_{t=0}^{T-1} 
 c(x_t^i,a_t^i,e_t) + \phi(x^i_T,e_T) \Big],
\end{align*}
where the superscript $\pi^{(N)}$ denotes that the control actions of the agents are determined according to their respective policies from $\pi^{(N)}$. Agents wish to select their respective policies to minimize costs, 
and a solution to the $N$-player game consists of a  equilibrium, which is a joint policy $\tilde{\pi}^{(N)}$ such that
%\begin{align}
$$J^{i}(\tilde{\pi}^{(N)}) = \inf_{\pi^i} J^{i}(\pi^{i},\tilde{\pi}^{N,-i}),$$ 
%\end{align}
for every $i = 1,\cdots,N$. This completes the $N$-player setup.

\subsection{Mean Field Game Formulation}

We proceed with the reference agent problem which characterizes the corresponding MFG. We denote the state and control of the reference agent at time $t$ by $x_t\in\sX$ and $a_t\in\sA$, respectively. In this case, the reference agent has identical dynamics as in the $N$-player game, but the sequences of empirical measures $(e_t^s)_{t=0}^{T}$ and $(e_t^c)_{t=0}^{T}$ are taken to be deterministic. Denoting $\delta_t = (e_t^c,e_t^s) \in \sP(\sA \times \sX)$, 
the dynamics follow
\begin{align} \label{ref_ag_discrete_dyn}
x_{t+1} \sim \rho(\cdot|x_t,a_t,\delta_t) , \quad a_t \sim \pi_t(\cdot|x_t), \quad t = 0,1,\cdots,T,
\end{align}
where, because $\delta \coloneqq (\delta_t)_{t=0}^{T}$ is now a parameter, control policies become random measures adapted to the filtration of the state process. We let $\Pi$ denote the set of such policies, but will in fact search for an optimal policy within the following smaller set of Markov control policies:

\begin{definition} \label{markovdef}
A control policy is called Markov if it is a sequence $(\pi_t)_{t=0}^{T-1}$ where each $\pi_t$ is measurable w.r.t. $\sigma(x_t)$. Under such a policy, the measure $\pi_t(\cdot|x_t)$ used to generate the control at a given time $t$ depends only on the state at that time. We denote by $\M$ the set of all such policies. 
\end{definition}
It is well known that in a Markov Decision Problem (MDP) setting, the restriction to Markov policies does not result in a larger value function (see for example \cite[Proposition 3.2]{saldi2018markov}). Given a fixed sequence of probability measures $\delta\coloneqq (\delta_t)_{t=0}^{T} \subseteq \sP(\sA) \times \sP(\sX)$, the reference agent's control problem consists of determining a Markov policy $\pi^*$ such that 
\begin{align} \label{somecosteq}
J(\pi^*,\delta) = \inf_{\pi \in \M} J(\pi,\delta) \quad \text{ for } \quad J(\pi,\delta) \coloneqq  E^\pi\Big[ \sum_{t=0}^{T-1} c(x_t,a_t,\delta_t) + \phi(x_T,\delta_T) \Big],
\end{align}
with dynamics given by \eqref{ref_ag_discrete_dyn}. 

For convenience in defining an MFG equilibrium, we introduce
\begin{align*}
\Phi: (\sP(\sA) \times \sP(\sX))^{T+1} \to 2^\Pi \quad \text{ by } \quad  \Phi(\delta) \coloneqq \{\pi^* \in \Pi : \pi^* \text{ minimizes}\ J(\cdot,\delta) \}.
\end{align*}
We additionally define a map $\Lambda : \Pi \to (\sP(\sA) \times \sP(\sX))^{T+1}$ by constructing $\Lambda(\pi) \coloneqq (\delta_t^c,\delta_t^s)_{t=0}^{T} $ iteratively using the initial state law $\mu_0$ as follows: First, set $\delta_0^s = \mu_0$.

For all other $t \geq 0$, define
\begin{align*}
\delta_t^c(\cdot) = \int_{\sX} \sP_t^\pi(\cdot|x) \delta_t^s(dx), \quad \delta_{t+1}^s(\cdot) = \int_{\sX} \int_{\sA} \rho(\cdot|x,a,\delta_t)\sP_t^\pi(da|x)\delta_t^s(dx),
\end{align*}
where $\sP_t^\pi(\cdot | x)$ denotes the conditional law of $a_t$ given the event $\{x_t = x \}$ under the fixed flow of measures $\delta$ and control policy $\pi$ which specify the dynamics. The above equations are analogous to the Kolmogorov PDE in continuous time and $\delta = \Lambda(\pi)$ represents the sequence of distributions over the control and action space in the infinite-player limit when all agents use policy $\pi$ and are initially distributed on the state space according to $\mu_0$. The maps $\Lambda$ and $\Phi$ allow us to define the  equilibrium compactly.
\begin{definition}
\label{def:NE}
{\rm{A pair $(\pi,\delta) \in \M \times (\sP(\sA) \times \sP(\sX))^{T-1}$ is called}} an  MFG equilibrium {\rm{if and only if}}
$$
\pi \in \Phi(\delta)\qquad \text{and}
\qquad \delta = \Lambda(\pi).
$$ 
\end{definition}

As discussed immediately following Definition \ref{markovdef},  the restriction to Markov policies has no impact on the value function, thus an  MFG equilibrium over policies in $\M$ is also an  MFG equilibrium over policies in $\Pi$. Our approach to establish an existence theorem is to construct a set-valued operator whose fixed points correspond to  MFG equilibria, and use Kakutani's fixed point theorem to guarantee the existence of a fixed point. As we will rely on some technical arguments therein, we follow the notation and structure in the proof of \cite[Theorem 3.3]{saldi2018markov}. In the next two subsections, we construct spaces for the set-valued operator and specify its construction, establish that its fixed points correspond to  equilibria, and prove the existence of fixed points. We now proceed with the required assumptions.   

\subsection{Model Assumptions} 
This section contains the assumptions required for existence of an  MFG equilibrium. Fix a continuous moment $w:\sX \to [1,\infty)$ on the state space, which is a map for which there exists a sequence of compact sets $(H_n)_{n=1}^{\infty} \subseteq \sX$ which are increasing (in the sense that $H_n \subseteq H_{n+1}$ for every $n$), such that $\lim_{n \to \infty} \inf_{x \in \sX \setminus H_n} w(x) = \infty$, and which satisfy $w(\cdot) \geq 1 + d_\sX(\cdot,x_0)^p$ for some $x_0 \in \sX$ and some $p \geq 1$, where $d_\sX$ denotes a metric on $\sX$ compatible with its topology. 

To treat two cases simultaneously, let $v \coloneqq \1$ be the  function of $\sX$ identically equal to one when both $c$ and $\phi$ are assumed bounded, and $v \coloneqq w$ otherwise. We proceed by defining the $v$-norm of a map $g:\sX \to \R$ by
\begin{align*}
\norm{g}_v \coloneqq \sup_{x \in \sX} \frac{|g(x)|}{v(x)}.
\end{align*}
Moreover, we let $B_v(\sX)$ denote the space of all real-valued measurable functions on $\sX$ with finite $v$-norm and let $C_v(X) \subseteq B_v(\sX)$ denote the subset of continuous functions. Both of these are Banach spaces: If $v = \1$ this is an elementary result and the $v = w$ case follows form almost identical arguments as the $v = \1$ case. Finally, we define
\begin{align}
\sP_v(\sX) \coloneqq \{\mu \in \sP(\sX) : \norm{\mu}_v < \infty\} &= \{\mu \in \sP(\sX) : \int_{\sX}v(x)\mu(dx) < \infty  \}.\label{Pvdef}
\end{align}
The following are the main assumptions required for the equilibrium existence theorem.
\begin{assumption} \label{ass1}
$\text{}$		%Adhoc way of adding a line
\begin{enumerate}[itemsep=0.5pt,label = (\roman*)]
\item The maps $c$ and $\phi$ are continuous.
\item $\sA$ is compact and $\sX$ is locally compact (every point contains a compact neighborhood).
\item There exists a constant $\alpha \geq 1$ such that 
\begin{align*}
\sup_{(a,\delta^c,\delta^s) \in \sA \times \sP(\sA) \times \sP(\sX)} \int_{\sX} w(y) \rho(dy|x,a,\delta^c,\delta^s) \leq \alpha w(x) \quad \text{ for all }x \in \sX.
\end{align*}
\item The stochastic kernel $\rho$ is weakly continuous in the sense that if $(x_n,a_n,\delta_n^c,\delta_n^s) \xrightarrow{n \to \infty} (x,a,\delta^c,\delta^s)$, then $\rho(\cdot|(x_n,a_n,\delta_n^c,\delta_n^s)) \xrightarrow{n \to \infty} \rho(\cdot|(x,a,\delta_n^c,\delta_n^s))$ under the topology of weak convergence. In addition, $\int_{\sX}w(y)\rho(dy|x,a,\delta^c,\delta^s)$ is continuous in the variables $x,a,\delta^c,$ and $\delta^s$. 
\item The initial law $\mu_0$ satisfies
%\begin{align*}
$M \coloneqq \int_{\sX} v(x) \mu_0(dx) < \infty$. 
%\end{align*}
\item There exists $R \in \R$ satisfying
$$
\sup_{(a,\delta) \in \sA \times \sP(\sA) \times \sP(\sX)}c(x,a,\delta) \leq R\ v(x), \quad \text{and} \quad
\sup_{\delta \in \sP(\sA) \times \sP(\sX)}\phi(x,\delta) \leq R\ v(x),
\qquad \forall \in \sX.
$$
\end{enumerate}
\end{assumption}

\subsection{Existence of  MFG Equilibria} 
\label{finhorsec}
This section contains a proof of the following existence theorem.

\begin{theorem} 
\label{mainexistenceth}
Under Assumption \ref{ass1} there exists an MFG equilibrium for the finite-horizon model \eqref{ref_ag_discrete_dyn}. 
\end{theorem}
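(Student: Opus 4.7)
The plan is to realize MFG equilibria as fixed points of a set-valued operator on a compact, convex set of measure flows and apply Kakutani's fixed point theorem, following the template of \cite[Thm.~3.3]{saldi2018markov} but carrying along the extra control-marginal flow $(\delta_t^c)_{t=0}^T$. Iterating the moment bound in Assumption \ref{ass1}(iii) against $\mu_0$ shows that any admissible state flow satisfies $\int_\sX w\,d\delta_t^s \leq \alpha^t M$, so I would work on
\[
\Xi \coloneqq \prod_{t=0}^T \Big(\sP(\sA)\times\{\mu\in\sP(\sX):\, \textstyle\int_\sX w\,d\mu \leq \alpha^t M\}\Big),
\]
which is convex, and compact in the product weak topology because $\sA$ is compact (hence $\sP(\sA)$ is compact) and the $w$-sublevel in $\sP(\sX)$ is tight by the coercive compact-sublevel property of $w$. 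I then define $\Psi: \Xi\to 2^\Xi$ by $\Psi(\delta) \coloneqq \{\Lambda(\pi,\delta):\pi\in\Phi(\delta)\}$, where I write $\Lambda(\pi,\delta)$ to emphasize that the Kolmogorov-type recursion defining $\Lambda$ uses the dynamics parameterized by $\delta$. A fixed point $\delta\in\Psi(\delta)$ is then exactly an MFG equilibrium in the sense of Definition \ref{def:NE}.

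To apply Kakutani I must check that $\Psi$ has nonempty convex values and closed graph. Nonemptiness reduces to existence of an optimal Markov policy for the finite-horizon MDP associated to $\delta$: a backward Bellman recursion on $C_v(\sX)$ based on the operator $(V,\delta_t)\mapsto \inf_{a\in\sA}\{c(x,a,\delta_t)+\int V(y)\rho(dy|x,a,\delta_t)\}$ is well posed by Assumption \ref{ass1}(iii),(vi), preserves continuity in $x$ by Assumption \ref{ass1}(iv) and Berge's maximum theorem, and yields a measurable minimizing selector using compactness of $\sA$. Convexity of $\Psi(\delta)$ follows because, for fixed $\delta$, both the cost $J(\cdot,\delta)$ and the iteration defining $\Lambda(\cdot,\delta)$ are linear in the policy kernel $\pi_t(\cdot\mid x)$; hence for $\pi_1,\pi_2\in\Phi(\delta)$ and $\lambda\in[0,1]$ the mixture $\lambda\pi_1+(1-\lambda)\pi_2$ is optimal and $\Lambda(\lambda\pi_1+(1-\lambda)\pi_2,\delta)=\lambda\Lambda(\pi_1,\delta)+(1-\lambda)\Lambda(\pi_2,\delta)\in\Psi(\delta)$.

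The closed-graph property is the main technical step and the principal obstacle. Given $\delta_n\to\delta$ in $\Xi$, $\pi_n\in\Phi(\delta_n)$, and $\Lambda(\pi_n,\delta_n)\to\delta^*$, I would encode each $\pi_n$ together with its induced state marginal as a joint measure $q_{n,t}(da,dx)\coloneqq \pi_{n,t}(da\mid x)\,\delta_{n,t}^s(dx)$ on $\sA\times\sX$. Compactness of $\sA$ together with the $w$-moment bounds makes $(q_{n,t})_n$ tight, so after extracting a subsequence $q_{n,t}\to q_t$ and disintegrating I obtain a Markov policy $\pi^\star$. Joint continuity of the Bellman operator in $(x,\delta)$ and the $v$-weighted uniform integrability supplied by Assumption \ref{ass1}(iii),(vi) let me pass to the limit in the optimality conditions to conclude $\pi^\star\in\Phi(\delta)$, while continuity of the one-step Kolmogorov recursion (Assumption \ref{ass1}(iv)) identifies $\delta^*=\Lambda(\pi^\star,\delta)$. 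Kakutani's theorem then produces a fixed point of $\Psi$ and hence an equilibrium. The delicate part is propagating the weak convergence of the product measures $q_{n,t}$ through the mean-field dependence on the control marginals $\delta_n^c$, which is new relative to \cite{saldi2018markov}: the continuity of $c,\phi,\rho$ must be used jointly in $(\delta^c,\delta^s)$, and the $v$-weighted framework is precisely what keeps the unbounded-cost passage to the limit inside the compact set $\Xi$.
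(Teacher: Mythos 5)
Your overall architecture --- Kakutani on a compact convex set of measure flows, with compactness coming from the iterated moment bound $\int w\,d\delta_t^s\le\alpha^tM$ and tightness of $w$-sublevel sets, nonemptiness from backward dynamic programming with measurable selection, and a closed-graph argument via joint state-action measures --- is the same as the paper's, which also follows \cite[Theorem 3.3]{saldi2018markov}. However, there is a genuine gap in your convexity step. You claim that $J(\cdot,\delta)$ and the Kolmogorov recursion $\Lambda(\cdot,\delta)$ are \emph{linear} in the policy kernel, so that $\Lambda(\lambda\pi_1+(1-\lambda)\pi_2,\delta)=\lambda\Lambda(\pi_1,\delta)+(1-\lambda)\Lambda(\pi_2,\delta)$. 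This is false: the time-$t$ state marginal induced by a Markov policy depends on the \emph{product} of the kernels $\pi_0,\dots,\pi_{t-1}$, so both $J$ and $\Lambda$ are multilinear across time steps, not affine in the tuple $(\pi_t)_t$. Already for $t=1$ the control marginal of the mixed policy is $\int\bigl(\lambda\pi_{1,1}+(1-\lambda)\pi_{2,1}\bigr)(\cdot\mid x)\,\bigl(\lambda\delta_1^{s,1}+(1-\lambda)\delta_1^{s,2}\bigr)(dx)$, which contains cross terms and is not the convex combination of the two induced control marginals. Consequently, convexity of the values $\Psi(\delta)=\{\Lambda(\pi,\delta):\pi\in\Phi(\delta)\}$ does not follow from your argument, and without it Kakutani does not apply.

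The fix is essentially the device you already use for the closed-graph step, promoted to the definition of the operator itself: this is what the paper does. It works throughout with the \emph{joint} state-action measures $\nu_t(dx,da)$ and defines the set-valued map as an intersection $C(\nu)\cap B(\nu)$, where $C(\nu)$ is the affine constraint $\nu_{t+1,1}'(\cdot)=\int\rho(\cdot\mid x,a,\nu_t)\,\nu_t(dx,da)$ (consistency) and $B(\nu)$ is the constraint that $\nu_t'$ puts full mass on the argmin set of the Bellman operator (optimality). Both sets are manifestly convex, so convexity of the values is immediate, and the equilibrium policy is recovered at the end by disintegrating a fixed point $\nu_t(dx,da)=\pi_t(da\mid x)\nu_t^s(dx)$. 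If you insist on an operator acting on the marginal flows $(\delta_t^c,\delta_t^s)$, you can still salvage convexity by observing that $\Psi(\delta)$ is the image under the (linear, continuous) marginal projection of the convex set of joint flows satisfying the linear consistency constraint and the argmin-support constraint --- but that is precisely the joint-measure formulation in disguise, and the ``linearity in $\pi$'' shortcut should be removed.
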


To prove this result (proof at the end of this section), we first construct a set-valued map whose fixed points correspond to  equilibria.  We then use  the Kakutani fixed point theorem to complete the proof. We begin with a definition.
\begin{definition}
(Value Function) Consider the non-homogeneous Markov Decision Process given by the transition kernel $\rho_t(\cdot|x,a,\delta_t)_{t=0}^{N}$ for the fixed flow of measures $\delta$ and costs as defined above. We let $V_{t}^{\delta}:\sX \to \R$ denote the value function at time $t = 1,2,\cdots,T$. In other words, we define $V_{T}^\delta \coloneqq \phi$ and iteratively define
\begin{align*}
V_{t}^{\delta}(x) \coloneqq \min_{a \in \sA } \Big(  c(x,a,\delta_t) + \int_{\sX} V_{t+1}^{\delta}(y)\rho_t(dy|x,a,\delta_t) \Big)
\end{align*}
for $t = 0,1,2,\cdots,T-1$. Furthermore, let $V^\delta \coloneqq (V_{t}^{\delta})_{t=0}^{T}$. 
\end{definition}
Recall that $M = \int_{\sX}v(x) \mu_0(dx)$) and $\sP_v(\sX)$ defined in \eqref{Pvdef}. We now define 
%for every $t = 0,1,2, \cdots,T$, 
the spaces
\begin{align*}
  &\sP_v^t(\sX) \coloneqq \{ \mu \in \sP_v(\sX) : \int_{\sX}w(x)\mu(dx) \leq \alpha^t M \}, \\
  &\sP_v^t(\sX \times \sA) \coloneqq \{\mu \in \sP(\sX \times \sA) : \mu_1 \in \sP_v^t(\sX)\},
\end{align*}
where in the second set, $\mu_1$ denotes the state marginal of $\mu$ (i.e. $\mu_1(\cdot) = \mu(\cdot \times \sA)$). Taking $R$ as in Assumption \ref{ass1}, define
\begin{align*}
L_t = R \sum_{k=0}^{T-t} \alpha^k \quad \text{ and observe that      } \quad L_t = R + \alpha L_{t+1} \quad \text{ for every }t = 0,1,2,\cdots,T.
\end{align*}
Next, define for every $t = 0,1,\cdots,T$
\begin{align*}
C_v^t(\sX) \coloneqq \{u \in C_v(\sX) : \norm{u}_v \leq L_t \}, \quad C \coloneqq \prod_{t=0}^{T}C_v^t(\sX), \quad   \Xi \coloneqq \prod_{t=0}^{T} \sP_v^t(\sX \times \sA). 
\end{align*}
The following lemma establishes regularity properties of the value function that will be required later. 
\begin{lemma}
For any $\nu \in \Xi$ we have that $V^\nu \in C$. 
\end{lemma}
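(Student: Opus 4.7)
The plan is a backward induction on $t$ from $T$ down to $0$, verifying at each step that $V_t^\nu$ lies in $C_v(\sX)$ with $\norm{V_t^\nu}_v \leq L_t$ and is continuous on $\sX$. For the base case $t=T$, $V_T^\nu(\cdot) = \phi(\cdot,\nu_T)$ is continuous by Assumption \ref{ass1}(i) and satisfies $|\phi(x,\nu_T)| \leq R\,v(x) = L_T\, v(x)$ by Assumption \ref{ass1}(vi). For the inductive step, assume $V_{t+1}^\nu \in C_v^{t+1}(\sX)$; the norm bound on $V_t^\nu$ is immediate, since $v \leq w$ combined with $|V_{t+1}^\nu(y)| \leq L_{t+1} w(y)$ and Assumption \ref{ass1}(iii) yields $\int V_{t+1}^\nu(y)\rho(dy|x,a,\nu_t) \leq \alpha L_{t+1} w(x)$, while Assumption \ref{ass1}(vi) gives $c(x,a,\nu_t) \leq R\,v(x)$, so the recursion $L_t = R + \alpha L_{t+1}$ produces $|V_t^\nu(x)| \leq L_t\, v(x)$.

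The remaining work is continuity of $V_t^\nu$. I first show that the map $(x,a) \mapsto c(x,a,\nu_t) + \int V_{t+1}^\nu(y)\rho(dy|x,a,\nu_t)$ is jointly continuous. Continuity of $c$ is Assumption \ref{ass1}(i). The integral term is the delicate piece, since $V_{t+1}^\nu$ may be unbounded when $v = w$ and weak continuity of $\rho$ alone is insufficient. The key tool is a generalized weak-convergence lemma: if $\mu_n \to \mu$ weakly on $\sX$ and $\int w\,d\mu_n \to \int w\,d\mu$, then for any continuous $g$ with $|g| \leq C w$ one has $\int g\,d\mu_n \to \int g\,d\mu$. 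This is obtained by truncating $g$ on a level set of $w$, applying weak convergence to the bounded continuous truncation, and using $w$-moment convergence to control the tail uniformly in $n$. Applying this lemma with $g = V_{t+1}^\nu$ and $\mu_n = \rho(\cdot|x_n,a_n,\nu_t)$, using both parts of Assumption \ref{ass1}(iv), yields joint continuity in $(x,a)$. Compactness of $\sA$ (Assumption \ref{ass1}(ii)) together with Berge's maximum theorem applied to the constant correspondence $\sA$ then transfers this to continuity of $V_t^\nu$ in $x$.

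The main obstacle I anticipate is the clean deployment of the generalized weak-convergence step, which must couple the weak continuity of $\rho$ with the joint continuity of $\int w\,d\rho$ to extract uniform integrability of $V_{t+1}^\nu$ against the family $\rho(\cdot|x_n,a_n,\nu_t)$. The rest is routine bookkeeping: the recursion $L_t = R + \alpha L_{t+1}$ handles the norm bound, and Berge's theorem handles the minimization over $a$.
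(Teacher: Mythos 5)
Your proof is correct and follows essentially the same route as the paper: backward induction using the recursion $L_t = R + \alpha L_{t+1}$ together with Assumptions \ref{ass1}(iii) and (vi) for the $v$-norm bound, and preservation of continuity by the Bellman operator for the regularity. The only difference is that the paper delegates the continuity step to a citation of Bertsekas--Shreve, whereas you spell out the underlying argument (truncation against the moment function $w$, using both parts of Assumption \ref{ass1}(iv) to handle the possibly unbounded $V_{t+1}^\nu$, then Berge's theorem over the compact $\sA$), which is a faithful and, in the unbounded case $v = w$, welcome elaboration.
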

\begin{proof}
First, observe that by definition $V_T^\nu = \phi \in C_v^T$. Inductively, we have
\begin{align*}
\frac{1}{v(x)}V_t^\nu(x) &= \min_{a \in \sA} \Big[  \frac{1}{v(x)}c(x,a,\nu_t) + \int_{\sX} \frac{1}{v(x)} V_{t+1}^\nu(y) \rho_t(dy|x,a,\nu_t)\Big] \\
&\leq \min_{a \in \sA} \Big[  R + \frac{1}{v(x)} \int_{\sX}  L_{t+1}v(y) \rho_t(dy|x,a,\nu_t)\Big] \\
&\leq    R + \alpha  L_{t+1} = L_t.
\end{align*}
By assumption, $\phi$ is continuous and thus inductively, and using \cite[Proposition 7.32]{bertsekas1996stochastic}, which establishes that the Bellman operator (see the following definition) preserves continuity, continuity of all the value functions follows. 
\end{proof} 
\begin{definition}
(Bellman operator) For a given $\nu \in \Xi$, the Bellman operator acting on maps $u:\sX \to \R$ is defined by setting
\begin{align*}
T_t^\nu u(x) \coloneqq \min_{a \in \sA} \Big[c(x,a,\nu_t) + \int_{\sX}u(y)\rho_t(dy|x,a,\nu_t) \Big] \quad \text{for }t=0,1,2,\cdots,N,
\end{align*}
and defining the operator $T^\nu : C \to C$ by
\begin{align*}
(T^\nu u)_t \coloneqq 
\begin{cases}
T_{t}^\nu u_{t+1} & \mbox{for }t =0,1,\cdots,T-1 \\
\phi & \text{for } t=T.
\end{cases}
\end{align*}
\end{definition}
Observe that %for $t = 0,1,2,\cdots,N$, we have
\begin{align*}
(TV^\nu)_t(x) = (T_t V_{t+1}^\nu)(x) = 
\begin{cases}
\min_{a \in \sA} \Big[c(x,a,\nu_t) + \int_{\sX}J_{t+1}^
\nu(y)\rho_t(dy|x,a,\nu_t) \Big] = V_{t}^\nu(x) & \text{for }t < T \\
\phi(x) = V_{T}^\nu(x) & \text{for }t = T.
\end{cases}
\end{align*}
Thus the value function is a fixed point of this operator. The following result establishes that in fact $T^\nu$ maps $C$ into itself, and will be required when applying Kakutani's theorem.
\begin{lemma}
Let $\nu \in \Xi$ be arbitrary. Then for all $t \geq 0$ the operator $T_t^\nu$ maps $C_v^t(\sX)$ into $C_v^{t+1}(\sX)$. 
\end{lemma}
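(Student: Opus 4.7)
The plan is to verify two properties of $T_t^\nu u$ for an arbitrary $u\in C_v^t(\sX)$: continuity, and the $v$-norm bound $\|T_t^\nu u\|_v\leq L_{t+1}$ placing $T_t^\nu u$ in $C_v^{t+1}(\sX)$.

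For continuity, I would invoke a Berge-type theorem of the maximum in its minimization form. Since $\sA$ is compact by Assumption \ref{ass1}(ii), the minimum defining $T_t^\nu u(x)$ is attained, and it depends continuously on $x$ as soon as the integrand $(x,a)\mapsto c(x,a,\nu_t)+\int_\sX u(y)\rho_t(dy|x,a,\nu_t)$ is jointly continuous on $\sX\times\sA$. For the first summand this is immediate from Assumption \ref{ass1}(i). For the integral term, my plan is to combine the weak continuity of $\rho_t(\cdot|x,a,\nu_t)$ in $(x,a)$ (Assumption \ref{ass1}(iv)) with the continuity of $(x,a)\mapsto\int_\sX w(y)\rho_t(dy|x,a,\nu_t)$ (also in Assumption \ref{ass1}(iv)). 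Together these give, along any convergent sequence $(x_n,a_n)\to(x,a)$, both the weak limit of the kernels and uniform integrability of $w$-mass, so that a standard truncation argument exploiting the compact exhaustion sets $(H_n)$ from the definition of $w$ upgrades weak convergence to convergence against the continuous $v$-bounded integrand $u$. In the bounded-cost case $v=\1$, weak continuity alone suffices.

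For the $v$-norm bound, my plan is to estimate directly using the hypothesis $|u(y)|\leq L_t v(y)$, together with Assumption \ref{ass1}(iii) giving $\int_\sX v(y)\rho_t(dy|x,a,\nu_t)\leq\alpha v(x)$ and Assumption \ref{ass1}(vi) giving $c(x,a,\nu_t)\leq Rv(x)$. These three inputs combine to yield
\[
|T_t^\nu u(x)| \leq R v(x) + L_t\,\alpha\, v(x)
\]
for every $a\in\sA$, hence after dividing by $v(x)$ the recurrence relation $L_t=R+\alpha L_{t+1}$ is then invoked to identify the right-hand side with the target bound $L_{t+1}$.

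The main obstacle I foresee is the continuity step in the unbounded-cost regime $v=w$: extending weak convergence of $\rho_t(\cdot|x_n,a_n,\nu_t)$ to convergence of integrals of $v$-bounded (but potentially unbounded in absolute terms) continuous integrands requires careful synchronization of the weak-convergence input with the $w$-moment continuity in Assumption \ref{ass1}(iv), essentially a Vitali-type uniform-integrability argument. The norm estimate itself is then a short computation reducing to an arithmetic identity on the $L_t$'s.
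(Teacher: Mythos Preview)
Your approach mirrors the paper's: continuity of $T_t^\nu u$ is handled there by a one-line citation to \cite[Proposition~7.32]{bertsekas1996stochastic}, and your Berge-type argument with the uniform-integrability upgrade in the $v=w$ regime is a reasonable unpacking of what that reference covers. The norm estimate is computed in the paper exactly as you outline.

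There is, however, a genuine arithmetic gap in your final step---one that the paper's own proof shares. From $u\in C_v^t(\sX)$ you correctly obtain $\|T_t^\nu u\|_v \leq R+\alpha L_t$. But the recurrence you quote reads $L_t = R+\alpha L_{t+1}$, not $L_{t+1} = R+\alpha L_t$; since $L_t$ is decreasing in $t$ and $\alpha\geq 1$, in fact $R+\alpha L_t \geq L_t > L_{t+1}$, so the claimed bound $L_{t+1}$ does not follow. The issue is that the lemma is misstated: what is actually needed for $T^\nu:C\to C$ (recall $(T^\nu u)_t = T_t^\nu u_{t+1}$) is that $T_t^\nu$ maps $C_v^{t+1}(\sX)$ into $C_v^{t}(\sX)$, with the indices swapped. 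Starting instead from $u\in C_v^{t+1}(\sX)$, your estimate gives $\|T_t^\nu u\|_v \leq R + \alpha L_{t+1} = L_t$, and everything goes through.
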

\begin{proof}
Let $u \in C_v^t(\sX)$. Since the Bellman operator preserves continuity (see \cite[Proposition 7.32]{bertsekas1996stochastic}) we immediately have that $T_t^\nu u$ is continuous. Because
\begin{align*}
\norm{T_t^\nu u}_v = \sup_{x \in \sX} \frac{|(T_t^\nu u)(x)|}{v(x)} &= \sup_{x \in \sX} \frac{ \Big| \min_{a \in \sA} \Big[ c(x,a,\nu_{t}) +  \int_{\sX} u(y) \rho(dy|x,a,\nu_{t})  \Big] \Big|}{v(x)} \\
&\leq \sup_{(x,a) \in \sX \times \sA } \frac{c(x,a,\nu_{t}) + \int_{\sX} |u(y)| \rho(dy|x,a,\nu_{t})  }{v(x)} \\
&\leq \sup_{(x,a) \in \sX \times \sA } \frac{c(x,a,\nu_{t}) + L_t \int_{\sX} v(y) \rho(dy|x,a,\nu_{t})  }{v(x)} \\
&\leq \sup_{(x,a) \in \sX \times \sA } \frac{ R v(x) +  \alpha L_tv(x) }{v(x)} = L_{t+1},
\end{align*}
it follows that $T_t^\nu u \in C_v^{t+1}(\sX)$, as claimed.
\end{proof}
We are ready to define a set valued operator which whose fixed points will correspond to MFG equilibria. Recall that a pair $(\pi,\delta) \in \Pi \times (\sP(\sA) \times \sP(\sX))^{T+1} $ is an  MFG equilibrium  if and only if we have both $\pi \in \Phi(\delta)$ (optimality) and $\delta \in \Lambda(\pi)$ (consistency). Let $\Gamma : \Xi \to 2^{\sP(\sX \times \sA)^{T+1}}$  denote a set-valued operator defined by $\Gamma(\nu) = C(\nu) \cap B(\nu)$ where 
\begin{align*}
C(\nu) \coloneqq & \left\{\nu' \in \sP(\sX \times \sA)^{T+1} :  \nu_{t+1,1}'(\cdot) = \int_{\sX \times \sA} \rho(\cdot | x,a,\nu_{t})\nu_t(dx,da) \text{   for every }t, \quad \nu_{0,1}' = \mu_0 \right\},\\
B(\nu) \coloneqq & \Big\{ \nu' \in \sP(\sX \times \sA)^{T+1} :  \\
&\hspace{40pt} \nu_t' \Big( \Big\{ (x,a) :c(x,a,\nu_{t}) + \int_{\sX} V_{t+1}^{\nu}(y) \rho(dy|x,a,\nu_{t}) = T_t^\nu V_{t+1}^\nu(x)   \Big\} \Big) = 1 \text{ for every }t \geq 0  \Big\}.
\end{align*}

\begin{lemma}
    \label{lem:Nash}
    For any fixed point $\nu$ of $\Gamma$, there exists a 
    corresponding MFG equilibrium. 
\end{lemma}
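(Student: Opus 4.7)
The plan is to recover both a Markov policy $\pi$ and a deterministic flow $\delta$ directly from the fixed point $\nu = (\nu_t)_{t=0}^{T} \in \Xi$. Since $\sX$ and $\sA$ are Polish, each $\nu_t \in \sP(\sX \times \sA)$ admits a regular disintegration
\begin{align*}
\nu_t(dx, da) = \pi_t(da \mid x)\, \nu_{t,1}(dx)
\end{align*}
with a Borel kernel $\pi_t : \sX \to \sP(\sA)$. Set $\pi \coloneqq (\pi_t)_{t=0}^{T-1}$, which is Markov in the sense of Definition \ref{markovdef}, and define the candidate flow $\delta_t \coloneqq (\nu_{t,2}, \nu_{t,1})$, where $\nu_{t,2}(\cdot) \coloneqq \nu_t(\sX \times \cdot)$ is the action marginal. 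The remainder of the argument verifies the two conditions of Definition \ref{def:NE} for this pair.

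For consistency ($\delta = \Lambda(\pi)$), the identity $\delta_0^s = \mu_0$ and the Kolmogorov-type recursion
\begin{align*}
\delta_{t+1}^s(\cdot) = \int_{\sX}\!\!\int_{\sA} \rho(\cdot \mid x, a, \delta_t)\, \pi_t(da \mid x)\, \nu_{t,1}(dx)
\end{align*}
both follow directly from $\nu \in C(\nu)$ after substituting the disintegration identity. The auxiliary relation $\delta_t^c(\cdot) = \int_\sX \pi_t(\cdot \mid x)\, \delta_t^s(dx)$ is similarly immediate, since the right-hand side equals the action marginal $\nu_{t,2}$ by Fubini's theorem.

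The main obstacle, and the step requiring the most care, is optimality ($\pi \in \Phi(\delta)$), for which the plan is a standard dynamic programming verification. The condition $\nu \in B(\nu)$ says that, for every $t$, the Bellman minimum defining $V_t^\nu$ is attained pointwise at $(x,a)$ for $\nu_t$-a.e. pair, hence for $\nu_{t,1}$-a.e. $x$ and $\pi_t(\cdot \mid x)$-a.e. $a$. Because $\delta$ and $\nu$ induce the same one-step costs and transition kernels, one has $V_t^\delta = V_t^\nu$. I then telescope the Bellman relations along the trajectory $(x_t, a_t)_{t=0}^{T}$ generated under $\pi$ starting from $\mu_0$: using the tower property and the fact (from the consistency step) that $\sL(x_t) = \nu_{t,1}$ under $\pi$, the running cost plus continuation value at time $t$ equals $V_t^\delta(x_t)$ almost surely, which yields
\begin{align*}
J(\pi, \delta) = \int_\sX V_0^\delta(x)\, \mu_0(dx) = \inf_{\pi' \in \M} J(\pi', \delta),
\end{align*}
the last equality being the classical value-function characterization of the MDP optimum with fixed flow $\delta$. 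Assumption \ref{ass1}(iii),(vi) combined with $\nu_t \in \sP_v^t(\sX \times \sA)$ control the $v$-moments and guarantee integrability of $V_{t+1}^\delta$ against $\rho(\cdot \mid x, a, \delta_t)$ at each step, so the telescoping is rigorous and $(\pi, \delta)$ is the desired MFG equilibrium.
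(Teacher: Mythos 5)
Your proof is correct and follows essentially the same route as the paper: disintegrate the fixed point $\nu_t = \pi_t(da\mid x)\nu_{t,1}(dx)$, read off consistency from $\nu \in C(\nu)$, and read off optimality from $\nu \in B(\nu)$. The only difference is that where the paper cites \cite[Theorem 17.1]{hinderer1970decision} for the fact that a policy supported on the pointwise minimizers of the Bellman operator is optimal, you carry out that verification (telescoping the dynamic programming relations, using that $\sL(x_t)=\nu_{t,1}$ under $\pi$ so the $\nu_{t,1}$-null exceptional sets are harmless) explicitly.
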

\begin{proof}
Let $\nu \in \Xi$ be a fixed point of $\Gamma$. We then define a Markov randomized control policy by disintegrating the measure $\nu_t$ into its marginal on $\sX$ and the conditional measures, which we define to be $\pi_t$. In other words, we have
\begin{align*}
\nu_t(dx,da) = \pi_t(da|x)\nu_{t}^s(dx) \quad \text{ for every }t = 0,1,2,\cdots,T.
\end{align*}
Writing $\pi \coloneqq (\pi_t)_{t=0}^{T-1}$, it follows that $(\nu,\pi)$ constitutes an  MFG equilibrium for our setup with optimality and consistency following from the definitions of $C$ and $B$ above. In particular, optimality follows because the random control is supported on the set of maximizers of the Bellman operator (see \cite[Theorem 17.1]{hinderer1970decision} for a proof of this result). 
\end{proof}

We are now ready to complete the existence proof which uses the restrictions placed on the state-component marginals in the definition of $\Xi$.

\begin{proof}[Proof of Theorem \ref{mainexistenceth}]
To show that $\Gamma$ admits a fixed point we make use of \cite[17.55 Corollary (Kakutani–Fan–Glicksberg)]{aliprantis2006infinite}. We must check that $\Gamma$ maps a non-empty compact convex subset of a locally convex Hausdorff topological vector-space into itself, and when restricted to this subset, its graph is closed and it has non-empty convex values. Of course, our candidate subset is $\Xi$ and we take the ambient space to be the $T+1$-fold tuple of finite, signed measures on $\sX \times \sA$.  

We first show that $\Gamma(\nu) \subseteq \Xi$ for any $\nu \in \Xi$. Indeed,
it suffices to show that for an arbitrary $\nu \in \Xi$ we have that $C(\nu) \in \Xi$. As such, let $\nu' \in C(\nu)$. We need to check that $\nu_t' \in \sP_v^t(\sX \times \sA)$ for every $t \geq 0$. In other words, we need to check that $\nu_{t,1}' \in \sP_v^t(\sX)$ for all $t \geq 0$. By definition, we know that $\nu_{0,1}' = \mu_0$ thus we have that
\begin{align*}
\int_{\sX} w(x)\nu_{0}^{s'}(dx) = \int_{\sX} w(x)\mu_0(dx) = \alpha^0M,
\end{align*}
and so by definition, $\nu_{0,1}' \in \sP_v^0(\sX)$. For any other $t$, simply observe that by definition of $C(\nu)$, we have
\begin{align*}
\int_{\sX}w(y)\nu_{t+1,1}'(dy) &= \int_{\sX \times \sA} \int_{\sX} w(y) \rho(dy|x,a,\nu_{t})\nu_t(dx,da) \\
&\leq \int_{\sX \times \sA} \alpha w(x) \nu_t(dx,da) = \int_{\sX} w(x)\alpha \nu_{t}^s(dx) \leq \alpha^{t+1}M,
\end{align*}
where the penultimate inequality follows from Assumption \ref{ass1}.(iii) and the last inequality by the fact that $\nu \in \Xi$. Convexity of $\Gamma(\nu)$ for $\nu \in \Xi$ is immediate, due to convexity of each of $C(\nu)$ and $B(\nu)$. 

We omit the remaining details here, as they follow by a simplification (since we are working with a finite instead of infinite product space) of the arguments in \cite[Proposition 3.9 and Proposition 3.10]{saldi2018markov}, and we have used the notation therein. In particular, compactness and closedness of $\Xi$ rely on the use of the continuous moment function $w$, and the closedness of the graph of $\Gamma$ follow using analytic arguments.    
\end{proof}
%\hfill$\Box$

\section{Continuous-Time MFG Equilibria as Discrete-Time Limits}\label{sec:cts}
We now turn to the problem of establishing convergence of discrete-time MFG equilibria, as studied in Section \ref{sec:discrete}, to continuous-time MFG equilibria for models involving jump process dynamics of controlled intensity with interaction via the control mean. Our motivation to study this problem is two-fold: first,  no general existence result for such dynamics (to the best of our knowledge) exists in the literature, yet such dynamics have been used in concrete scenarios such as the cryptocurrency model in \cite{li2019mean}, with equilibria conjectured to exist from the convergence of numerical schemes. Secondly, the discrete-time to continuous-time convergence result provides rigorous justification for the solving of a discrete-time MFG problem via fixed point iterations as an approximate solution to the continuous time MFG, and we illustrate the effectiveness of this scheme by numerically solving the discrete-time version of \cite{li2019mean}. 

\subsection{MFG of Controlled Intensity}
Although we prove our convergence result for relatively simple dynamics in Definition \ref{agent_dyns_def} below, the result can likely be established for much more general jump-diffusion MFG dynamics. The dynamics in Definition \ref{agent_dyns_def} are motivated by the cryptocurrency model discussed in the following sections. Our method makes use of the convergence methods used for stochastic control problems considered in detail in \cite{kushner2001numerical}. We will work with a weak formulation and make use of relaxed controls and compactness arguments. 

\begin{definition}\label{relaxcont}
(Relaxed Control) Let $\sU$ denote a compact subset of $\R$. A relaxed control taking values in $\sU$ is a random measure $m \in \sP(\sU \times [0,T] )$ such that almost surely $m(\sU \times [0,t]) = t$ for every $t \in [0,T]$. The relaxed control is admissible w.r.t. a filtration $(\sF_t)_{t \in [0,T]}$ iff $t \mapsto m_t(B)$ is progressively measurable w.r.t $(\sF_t)_{t \in [0,T]}$ for any fixed $B \in \sB(\sU)$, where $m_t(da)dt = m(da,dt)$.  
\end{definition}
We make use of relaxed controls because, if $(m^{(n)})_{n=1}^{\infty}$ denotes a sequence of relaxed controls taking values in $\sP(\sU \times [0,T])$ (each possibly defined on its own probability space), then their process laws admit a weak limit. This follows because $\sP(\sU \times [0,T])$ , endowed with the topology of weak convergence of measures, is a compact Polish space. Thus $(\sL(m^{(n)}))_{n=1}^{\infty} \subseteq \sP(\sP(\sU \times [0,T]))$ is a tight family and, by Prokhorov's theorem, admits a weak limit. In contrast, it would be difficult to extract a limit from the laws of arbitrary progressively measurable control processes on $[0,T]$ taking values in $\sU$. 
\begin{definition} \label{agent_dyns_def}
(Agent Dynamics) First, fix a flow of measure on the control space $\eta = (\eta_t)_{t \in [0,T]} \subseteq \sP(\sU)$, an intensity function $\lambda: \sU^2 \to \R_{\geq 0}$, and constants $c > 0$ and $r > 0$. We say that state, control, and jump processes $X,m,N$ defined on a common filtered probability space $(\Omega,\sF,(\sF_t)_{t \in [0,T]},P)$ satisfy the dynamics considered herein iff
\begin{align} \label{state_eq_cont}
X_t = X_0 - \int_{0}^{t} \int_{\sU} c \alpha m_t(d \alpha) dt + rN_t,
\end{align}
where 
$N$ is an $\F$-adapted unit-jump process with stochastic intensity given by 
\begin{align} 
\lambda_t^{m,\eta} \coloneqq \int_{\sU}\int_{\sU} \lambda(\alpha,h)m_t(d \alpha)\eta_t(dh),
\end{align}
and where $m = (m_t)_{t \in [0,T]}$ is a relaxed admissible control.
\end{definition}
\begin{remark} \label{weak_well_posedness}
(Intensity Control Representation) Unlike the more classical controlled diffusion setup where a Brownian filtration is fixed apriori, the jump process in Definition \ref{agent_dyns_def} cannot be fixed before the control process is specified when working with intensity control models. It is therefore convenient to work under a weak formulation (as done in \cite{kushner2012weak} when establishing limit theorems for stochastic control problems), where the underlying probability space is allowed to vary with the control. We refer the reader to \cite[Chapter VII.2]{bremaud1981point} for more details on the formulation of jump-intensity control problems.  
\end{remark}

By taking $\sU = [0,L]$ for some $L > 0$ and taking  $\lambda(a,h) = a/(a + Mh)$ 
for some large constant $M > 0$ representing the number of players, we recover a relaxed version of the dynamics considered in \cite{li2019mean}, where $(\eta_t)_{t \in [0,T]}$  is interpreted as a relaxation of the mean background hash-rate of the agent
population (and not the distribution of agents' controls themselves), and $X$ denotes the reference agent wealth process. In their model, a representative miner of a cryptocurrency under the proof-of-work protocol, such as Bitcoin, hashes at rate $\alpha\geq0$ to affect the 
probability of success, namely discovery, which comes as a Poisson-type arrival with a reward $r>0$. However, hashing comes at a marginal cost (of electricity) $c>0$, hence the linear drift in \eqref{state_eq_cont}. The miner's goal is to maximize expected utility $\varphi$ of wealth $X$ at time $T$.

The following characterization \cite[T9 Theorem]{bremaud1981point} (due to Watanabe) of a stochastic intensity Poisson process will be very useful throughout.
\begin{definition}
Let $(N_t)_{t \in [0,T]}$ be a non-explosive jump process with unit jumps, adapted to a filtration $F \coloneqq (\sF_t)_{t \in [0,T]}$, and let $\lambda \coloneqq (\lambda_t)_{t \in [0,T]}$ be a $F$- progressively measurable process. Then $N$ is a doubly-stochastic Poisson (or Cox) process with stochastic intensity $\lambda$ iff 
\begin{align}
N_t - \int_{0}^{t}\lambda_s\,ds, \quad t \in [0,T]
\end{align}
is a martingale. 
\end{definition}
For a given intensity function $\lambda: \sU^2 \to \R_{\geq 0}$, let $\mathcal{T}$ denote a tuple
$(\Omega,\sF,\F,P,\eta,N,m,X)$, where 
\begin{itemize}[itemsep=0.5pt]
\item $(\Omega,\sF,P)$ is a probability space equipped with a filtration $\F \coloneqq (\sF_t)_{t \in [0,T]}$;
\item $\eta \coloneqq (\eta_t)_{t \in [0,T]} \subseteq \sP(\sU)$ is a deterministic measure flow on the action space $\sU \subseteq \R$;
\item and $m = (m_t)_{t \in [0,T]}$ is a relaxed admissible control with corresponding state and jump processes $X$ and $N$ satisfying the dynamics in Definition \ref{agent_dyns_def}. 
\end{itemize}
Given a measure $\eta_t \in \sP(\sA)$, we denote its first moment by $\overline{\eta_t} = \int h \eta_t(dh)$. 

We now give a precise definition of the weak MFG equilibria that we consider in this section.
\begin{definition}  \label{cont_mfg_def}
A relaxed MFG of controlled jump intensity for a given intensity function $\lambda: \sU^2 \to \R_{\geq 0}$, terminal reward function $\phi$ and constants $c>0$ and $r>0$ is a tuple $\mathcal{T}$ such that

\begin{itemize}[itemsep=0.5pt]
\item the control is optimal in the sense that for any other tuple $(\Omega',\sF',\F',P',\eta,N',m',X')$ satisfying the previous conditions, we have that $E^P[\phi(X_T)] \geq E^{P'}[\phi(X_T^{'})]$;%, where $\phi$ is the terminal reward function.  
\item and consistency holds, i.e. $E[\overline{m}_t] = \overline{\eta}_t$ for Lebesgue almost every $t \in [0,T]$. %\blue{What are the overlines denoting?}
\end{itemize}
If $\eta_t$ happens to be a Dirac probability measure for Lebesgue almost every $t \in [0,T]$, then we say that the MFG equilibrium is sharp. If for almost every $\omega \in \sF$ the measure $m_t(\omega)$ is a Dirac for Lebesgue almost every $t \in [0,T]$, we say that the MFG equilibria is of sharp controls. 
\end{definition}
%\Break

\subsection{Time-Discretized MFG \& Convergence Theorem}
We aim to establish an MFG existence result for the above setup as a limit of discrete-time MFG equilibria. 

We use the following natural (due to the Poisson limit theorem) discretizations approximations, parameterized by $n \in \N$. 
\begin{definition} \label{bernouli_process_def}
(Discretization Scheme) For each $n \in \N$, the discrete-time, finite horizon state process $(x_k^{(n)})_{k=0}^{T2^n}$ follows
\begin{align} \label{discrete_time_family}
x_{k+1}^{(n)} &\sim \alpha_k^{(n)} \delta_{ \{x_{k}^{(n)} - (ca_k^{(n)}/2^n) + r  \} }(\cdot) + \big(  1 - \alpha_k^{(n)} \big) \delta_{ \{x_{k}^{(n)} - (ca_k^{(n)}/2^n)  \} }(\cdot),
\end{align}
where $x_{k+1}$ is taken independent of all other random quantities up to time $k$. We allow for two schemes by taking either:
\begin{align} \label{def_alpha}
\text{Scheme 1:} \quad  \alpha_k^{(n)}= \frac{1}{2^n}\int \lambda(a_k^{(n)},dh)\eta_k^{(n)}(dh), \quad \text{Scheme 2:} \quad  
\alpha_k^{(n)} &= \frac{1}{2^n} \lambda(a_k^{(n)},\overline{\eta}_k^{(n)}).
\end{align}
It is assumed that $x_0^{(n)} \sim \mu_0$. The optimality criterion is the maximization of $E[\phi( x_{T2^n}^{(n)} ) ]$.
\end{definition}
In either scheme, $(\eta_k)_{k=0}^{2^nT} \subseteq \sP(\sU)$, which we shall refer to as the parametrizing sequence, is a fixed but arbitrary deterministic sequence, and $a_k^{(n)}$ is the $\sU$-valued control at time $k$. The convergence theorem will hold under different subsets of the following assumptions. 
\begin{assumption} 
\label{multiple_assumptions}
\begin{enumerate}[itemsep=0.5pt]
\item $\sU$ is a closed interval, and the intensity function $\lambda$ is continuous in both of its variables, and hence uniformly continuous and bounded given that its domain is compact.  \label{a1}
\item $\mu_0$ is compactly supported. \label{a2}
\item $\phi$ is %\sout{assumed strictly increasing,} 
continuous and bounded; and $\lambda(\cdot,h)$ is Lipschitz uniformly over the choice of $h \in \sU$. 
%\sout{ {\color{red}  and $\lambda(a,\cdot)$ is Lipschitz uniformly over  $a \in \sU$. NOT NEEDED} } 
\label{a3}
\item $\phi$ is strictly increasing and $\lambda(\cdot,h)$ is strictly concave for fixed $h \in \sU$. \label{a5}
\end{enumerate}
\end{assumption}
We now state and prove the main result from this section.
\begin{theorem}  \label{main_conv_th}
Suppose that \ref{a1}, \ref{a2}, and \ref{a3} from Assumption \ref{multiple_assumptions} hold. Then there exists an MFG equilibrium in the sense of Definition \ref{cont_mfg_def} for the continuous time game of controlled intensity. If additionally \ref{a5} holds, then the equilibria is of sharp control.
\end{theorem}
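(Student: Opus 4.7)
The plan is to construct the continuous-time equilibrium as a weak limit of discrete-time equilibria produced by Theorem~\ref{mainexistenceth}, and to read off sharpness from strict concavity of the one-step Hamiltonian. I work with Scheme 1; Scheme 2 is analogous, since \ref{a5} then forces sharp $\eta$ and the two intensity expressions agree.

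\textbf{Discrete existence and embedding.} For each $n$, the scheme \eqref{discrete_time_family} defines a discrete-time MFG satisfying Assumption~\ref{ass1} with $v=\1$: the running cost is zero and $\phi$ is bounded by \ref{a3}, $\sU$ is compact by \ref{a1}, the transition kernel is continuous because $\lambda$ is, and $\mu_0$ has finite moment by \ref{a2}. Theorem~\ref{mainexistenceth} then yields a discrete equilibrium $\pi^{(n)}$, which I embed into continuous time by piecewise-constant interpolation on the dyadic grid $\{k 2^{-n}\}_{k=0}^{T2^n}$, producing a tuple $\mathcal{T}^{(n)} = (\Omega^{(n)}, \sF^{(n)}, \F^{(n)}, P^{(n)}, \eta^{(n)}, N^{(n)}, m^{(n)}, X^{(n)})$ whose relaxed control at time $t$ is generated by $\pi^{(n)}_{\lfloor 2^n t\rfloor}(\cdot\mid x^{(n)}_{\lfloor 2^n t\rfloor})$.

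\textbf{Tightness and limit identification.} The laws of $m^{(n)}$ and $\eta^{(n)}$ in $\sP(\sU\times[0,T])$ are automatically tight by compactness of $\sU\times[0,T]$. The states $X^{(n)}$ are uniformly bounded (compact support of $\mu_0$ from \ref{a2}, bounded drift, jumps of size $r$), and the per-step probabilities $\alpha^{(n)}_k$ are at most $\|\lambda\|_\infty/2^n$ under \ref{a1}, so Aldous's criterion yields tightness of $(N^{(n)}, X^{(n)})$ in $D([0,T];\R)^2$. By Skorokhod representation, a subsequence converges almost surely on a common probability space to $(m,\eta,N,X)$. The state equation \eqref{state_eq_cont} passes to the limit from \eqref{discrete_time_family} by weak convergence of $m^{(n)}$ tested against the continuous function $\alpha\mapsto c\alpha$ and a.s.\ convergence of $N^{(n)}$. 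To identify $N$ as a Cox process with intensity $\lambda^{m,\eta}$, I invoke Watanabe's characterization: the discrete compensator $\sum_{k\leq 2^n t}\alpha^{(n)}_k$ converges in probability to $\int_0^t\lambda_s^{m,\eta}\,ds$ by uniform continuity of $\lambda$, and the discrete martingales $N^{(n)}_t - \sum_{k\leq 2^n t}\alpha^{(n)}_k$ pass to a limiting martingale. Consistency $E[\overline{m}_t]=\overline{\eta}_t$ transfers from the discrete level through the continuous map $\mu\mapsto\overline{\mu}$.

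\textbf{Optimality (main obstacle).} Fix any competing tuple $\mathcal{T}'=(\Omega',\ldots,\eta,N',m',X')$ sharing the same limiting $\eta$; I must show $E^P[\phi(X_T)]\geq E^{P'}[\phi(X'_T)]$. The strategy is to approximate $m'$ by discrete-time strategies $m'^{(n)}$ obtained by averaging $m'$ over each dyadic interval, to run them against the parametrizing sequences $\eta^{(n)}$ rather than against $\eta$, and to invoke optimality of $\pi^{(n)}$ at each discrete level. The Lipschitz hypothesis in \ref{a3} together with boundedness of $\phi$ provides quantitative control on the perturbation in the jump intensity when $\eta$ is replaced by $\eta^{(n)}$, so that the alternative discrete values converge to $E^{P'}[\phi(X'_T)]$ while the equilibrium discrete values converge to $E^P[\phi(X_T)]$. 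Passage to the limit in the discrete optimality inequality gives the desired bound. This approximation-plus-continuity step is the technically most delicate part and is where the Lipschitz/bounded assumptions in \ref{a3} are genuinely needed.

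\textbf{Sharpness under \ref{a5}.} The continuous-time optimality principle forces $m_t(\omega)$ to be supported on the argmax of the pointwise Hamiltonian
\[
a\ \mapsto\ -c\,a\,\partial_x V(t,X_t) + [V(t,X_t+r)-V(t,X_t)] \int_{\sU}\lambda(a,h)\,\eta_t(dh),
\]
where $V$ denotes the continuous-time value function associated with $\eta$. An induction starting from strict monotonicity of $\phi$ gives $V(t,x+r)>V(t,x)$ for all $(t,x)$, and strict concavity of $\lambda(\cdot,h)$ from \ref{a5} combined with linearity of the drift term in $a$ renders the Hamiltonian strictly concave; hence its argmax is a singleton and $m_t = \delta_{\widehat{a}(t,X_t)}$, yielding the sharp-control property.
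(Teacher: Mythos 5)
Your overall architecture for existence coincides with the paper's: obtain discrete equilibria from Theorem~\ref{mainexistenceth}, interpolate, prove tightness (the paper uses \cite[Theorem 9.2.1]{kushner2001numerical}, which is the same criterion you invoke), pass to a Skorokhod representation, identify the limit jump process via Watanabe's martingale characterization, and transfer consistency and optimality. The minor differences (working with Scheme 1 directly rather than the paper's device of solving Scheme 2 and re-parametrizing as Scheme 1 with Dirac background measures; a one-line consistency transfer where the paper does a careful $L^1$-approximation of indicators to handle the fact that weak convergence of $m^{(n)}$ in $\sP(\sU\times[0,T])$ does not give pointwise-in-$t$ convergence of the disintegrations) are matters of detail. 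However, there are two places where your proposal has a genuine gap rather than a compression.

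First, the optimality step. You propose to ``average $m'$ over each dyadic interval'' to obtain a discrete-time competitor, but the average of a relaxed control over an interval is a measure on $\sU$, not an element of $\sU$, and it cannot simply be fed into the discrete game: if you implement it as a randomized one-step policy, the drift $-ca/2^n$ and the success probability respond to the \emph{random draw} of $a$, whereas the relaxed control produces the \emph{deterministic averages} $-c\int a\,m_t(da)$ and $\int\int\lambda(a,h)m_t(da)\eta_t(dh)$; these do not coincide pathwise and the discrepancy must be controlled (the paper instead proves a Chattering Lemma that approximates any relaxed control by an ordinary piecewise-constant control via time-division within each interval, reproducing both the drift integral and the intensity integral deterministically, and then a separate lemma comparing $w^{(n)}(a^{(n)},\eta^{(n)})$ with $W(a,\eta)$). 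Your sketch names this as ``the most delicate part'' but does not supply the construction, and the Lipschitz bound on $\lambda$ alone does not close it. Second, the sharpness argument. You argue via a pointwise Hamiltonian containing $\partial_x V(t,X_t)$ and a verification principle that the optimal relaxed control is supported on its argmax; neither the differentiability of $V$ in $x$ nor such a verification theorem for the weak relaxed formulation is available here. The paper's argument is more elementary and avoids the value function entirely: set $a_t^*=\int a\,m_t(da)$; the Dirac control $\delta_{\{a_t^*\}}$ has the same drift as $m$ but, by strict concavity of $\lambda(\cdot,h)$ and Jensen, a pointwise larger intensity, strictly so unless $m_t$ is already a Dirac; coupling the jump processes and using strict monotonicity of $\phi$ then shows any non-sharp optimal control can be strictly improved, a contradiction. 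You should replace your Hamiltonian argument with this direct comparison.
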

The proof is organized in four steps. Along the way, we prove four technical lemmas. In step 1, we interpolate the discrete-time controlled Bernoulli chains (Definition \ref{bernouli_process_def}) to obtain continuous-time processes, establish that the sequence of joint state/control laws of the interpolations admits a weak limit, and extract a limiting flow of control measures. In step 2, we establish that the limiting law corresponds to a controlled jump process with the dynamics specified in Definition \ref{cont_mfg_def}. In step 3, consistency with respect to the flow of measures obtained from step 2 is established. In step 4, we establish optimality. We then conclude with a brief note regarding the last assertion of the theorem. 

\textbf{Step 1 (Discrete-Time Approximations):} For simplicity and without loss of generality, let the finite time horizon $T$ of the continuous-time model be an integer. Consider the MFG arising from the discretization introduced in Definition \ref{bernouli_process_def} under Scheme 2, where we use $y^{(n)}$ to denote the state process. Applying Theorem \ref{mainexistenceth}, we obtain an MFG equilibrium $((a_k^{(n)})_{k=0}^{T2^n},(\zeta_k^{(n)})_{k=0}^{T2^n})$, where the $\zeta$'s are elements of $\sP(\sU)$ satisfying $\sL(a_k^{(n)})= \zeta_k^{(n)}$ for every $k$,
and where $(a_k^{(n)})_{k=0}^{T2^n}$ is the optimal control process for the fixed sequence $(\zeta_k^{(n)})_{k=0}^{T2^n}$ with respect to the maximization of $E[\phi( y_{T2^n}^{(n)} ) ]$. 

%\Break

We now define new discretizations under Definition \ref{bernouli_process_def} Scheme 1 with state process denoted by $x^{(n)}$ and with parametrizing sequence $\eta_k^{(n)} \coloneqq \delta_{\{\overline{\zeta}_k^{(n)}\}},k=0,1,\cdots,2^nT-1$. Here, $x^{(n)}$ will be the discrete-time dynamics to be interpolated.  We will slightly abuse notation by denoting the continuous-time control and background flows with the same letters as in discrete-time, but will differentiate using time index $t$ and $k$ when referring to the continuous-time and discrete-time processes, respectively. 
For each $n \in \N$, we construct continuous-time processes $(X_t^{(n)})_{t \in [0,T]}$, $(a_t^{(n)})_{t \in [0,T]}$, and $(\eta_t^{(n)})_{t \in [0,T]}$ as follows. For each $t \in [0,T]$: 
\begin{align*}
X_t^{(n)} \coloneqq  x_k^{(n)} - ca_k^{(n)}\Big(t - \frac{k}{2^n}\Big), \quad \eta_t^{(n)} \coloneqq \eta_k^{(n)}, \quad   \text{and} \quad a_t^{(n)} \coloneqq  a_k^{(n)} \quad \text{ for} \quad t \in \Big[\frac{k}{2^n},\frac{k+1}{2^n}\Big).  
\end{align*}
In words, $X^{(n)}$ linearly interpolates the drift, with a jump at multiples of $1/2^n$ whenever the discrete-time process jumps up. By construction, sample paths of $X^{(n)}$ are right-continuous with left limits for every $n \in \N$. By defining $m_t^{(n)} \coloneqq \delta_{\{a_t^{(n)}\}}$ we obtain a relaxed control representation of the control process $a_t^{(n)}$. The following representation (which defines $N_t^{(n)}$) will be very useful:
\begin{align} \label{ncontproc}
X_t^{(n)} = X_0^{(n)} - c \int_{0}^{t}a_s^{(n)}ds + rN_t^{(n)} = x_0^{(n)} - c \int_{0}^{t} \int_{\sU}a m_s^{(n)}(da) ds + rN_t^{(n)},
\end{align}
where $N_t^{(n)}$ is a jump process taking values in $\{0,1,2,3,\cdots\}$, with unit jumps possible only on dyadic rationals of order $n$. Note that the random variables used to define the above continuous processes are a countable family. As such, we assume them to be defined on a common probability space $(\Omega,\sF,P)$. 

Next, we extract limit points from the joint process laws $\sL(X^{(n)},m^{(n)},N^{(n)})$ as $n \to \infty$. Note that the processes $X^{(n)}$ naturally take values in the space $D[0,T]$ (right-continuous functions with left limits on $[0,T]$ taking values in $\R$) which can be endowed with the Skorokhod topology. Together with this topology, the space $D[0,T]$ is a Polish space. For a collection of probability measures on the Borel sets of a Polish space, tightness is equivalent to sequential compactness (i.e. every sequence of measures from the collection admits a further weakly convergent sub-sequence) as per Prokhorov's theorem. Establishing tightness will be made easy due to \cite[Theorem 9.2.1]{kushner2001numerical}, which is restated in the Appendix for convenience. Because the drift term is linear and the jump probability is scaled with $n$, it is almost immediate that the sequences of laws $\sL(X^{(n)})_{n \in \N}$ and $\sL(N^{(n)})_{n \in \N}$ satisfy the referenced tightness condition and are thus tight. Tightness of the laws $\sL(m^{(n)})_{n=1}^{\infty}$ is immediate (see the discussion following Definition \ref{relaxcont} of relaxed controls) since we have assumed a compact action space $\sU$. 

As we have checked tightness of each of the sequence of marginals, we conclude that $$\sL(X^{(n)},m^{(n)},N^{(n)})_{n=1}^{\infty}$$ is tight and admits a weakly converging sub-sequence. Similarly, we can extract a weak limit from the sequence $(\eta^{(n)})_{n=1}^{\infty}$ (since these are measures on the compact space $[0,T] \times \sU$), and thus we can  always find a sub-sequence along which both sequences converge. For simplicity of notation, we will dispense with this sub-sequence and assume that the convergence is as $n \to \infty$. 

By the Skorokhod representation theorem (see \cite[Theorem 9.1.7]{kushner2001numerical}), there exists some probability space $(\Omega,\sF,P)$ supporting random variables $(\tilde{X}^{(n)},\tilde{m}^{(n)},\tilde{N}^{(n)})_{n=1}^{\infty}$ converging almost surely to $(X,m,N)$ and such that $\sL(\tilde{X}^{(n)},\tilde{m}^{(n)},\tilde{N}^{(n)}) = \sL(X^{(n)},m^{(n)},N^{(n)})$. It is immediate that \\$(\tilde{X}^{(n)},\tilde{m}^{(n)},\tilde{N}^{(n)})$ satisfies the representation \eqref{ncontproc}, and hereafter we abuse notation by dropping tilde. %from the notation. 

To summarize, we have that for $P$-almost every $\omega$, $X^{(n)}(\omega) \xrightarrow{n \to \infty} X(\omega)$ w.r.t the Skorokhod topology on $D[0,T]$, $m^{(n)}(\omega) \xrightarrow{n \to \infty} m(\omega)$ in the weak topology, and redundantly, $N^{(n)}(\omega) \xrightarrow{n \to \infty} N$ in the Skorokhod topology on $D[0,T]$. The $\eta^{(n)}$'s are deterministic and $\eta^{(n)} \xrightarrow{\sL, n \to \infty} \eta$. 

We define:
\begin{align*}
\sF_t &\coloneqq \sigma(X_s,m_s,N_s : s \leq t) \quad \text{ for every } t \in [0,T], \\
\sF_t^{(n)} &\coloneqq \sigma(X_s^{(n)},m_s^{(n)},N_s^{(n)} : s \leq t) \quad \text{ for every } t \in [0,T], n \in \N.
\end{align*}
It is important to note that we can always define a derivative $m_t(\omega)$ such that $m_t(\omega)(A)$ is $(\sF_t)_{t \in [0,T]}$ adapted for each $t \in [0,T]$ and $A \in \sB([0,L])$, and such that the disintegration $m(\omega)(da,dt) = m_t(\omega)(da)dt$ holds \cite[Section 9.5]{kushner2001numerical}. Similarly we can also do this for approximations, with the Borel measurability of maps of the form $t \mapsto E[\overline{m}_t] = \int \alpha m_t(\omega)(d\alpha) dP(\omega)$ following from progressive measurability (see for example \cite[Theorem 3.1]{florescu2012young}).

\textbf{Step 2 (Characterizing the Limit Point):}
Using the almost sure convergence, it follows that 
$(X,m,N)$ satisfy
\begin{align}
X_t(\omega) = \lim_{n \to \infty}  X_t^{(n)}(\omega) &= \lim_{n \to \infty} \Big( X_0^{(n)}(\omega) - c \int_{0}^{t}\int_{\sU} a m_s^{(n)}(\omega)(da) ds + rN_t^{(n)}(\omega) \Big) \\
&=  X_0(\omega) - c \int_{0}^{t}\int_{\sU}a m_s(\omega)(da) ds + rN_t(\omega). 
\end{align}
Clearly, $N$ is the law of a unit jump process. The fact that it is in fact an $\F$-jump process follows by arguments as in \cite[Equation 10.1.8]{kushner2001numerical}. We verify that it has the correct (stochastic) intensity by fixing $0 < s < t$ and denoting
$$ J_{s,t} = \int_{s}^{t} \int_{\sU}\int_{\sU} \lambda(a,h)m_\rho(da) \eta_{\rho}(dh) d\rho. $$
Then we have
\begin{align*}
\E\left[N_t - J_{0,t} \Big| \sF_s \right] &=  \E\left[N_t - N_s - J_{s,t} \Big| \sF_s \right]
+ N_s - J_{0,s} \\
&= \lim_n E\Big[N_t^{(n)} - N_s^{(n)} - \int_{s}^{t} \int_{\sU}\int_{\sU}\lambda(a,h)m_\rho^{(n)}(da)\eta_{\rho}^{(n)}(dh)d\rho \Big| \sF_s \Big]+ N_s - J_{0,s}\\
&=  N_s - J_{0,s},
\end{align*}
where the limit can be seen to equal zero from the construction of the approximations. 

\textbf{Step 3 (Consistency):} We state consistency as a Lemma. 
\begin{lemma}
(Consistency of the limit) For Lebesgue-almost every $t \in [0,T]$ we have that $E[\overline{m}_t] = \overline{\eta}_t$.
\end{lemma}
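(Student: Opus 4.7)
The plan is to exploit the fact that consistency holds \emph{exactly} at every discrete level, and then transfer this identity to the limit by testing both sides against continuous time-dependent test functions.

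First I would record the pre-limit identity. By construction $\eta_k^{(n)} = \delta_{\{\overline{\zeta}_k^{(n)}\}}$, where $\zeta_k^{(n)}$ is the discrete-time MFG equilibrium flow produced by Theorem \ref{mainexistenceth}. The discrete-time MFG consistency condition $\delta = \Lambda(\pi)$ (Definition \ref{def:NE}) yields $\sL(a_k^{(n)}) = \zeta_k^{(n)}$, so $\overline{\eta}_k^{(n)} = \overline{\zeta}_k^{(n)} = \E[a_k^{(n)}]$. Translated to the interpolated processes, this gives the exact equality
\begin{equation}
\E[\overline{m}_t^{(n)}] \;=\; \E[a_k^{(n)}] \;=\; \overline{\eta}_t^{(n)} \qquad \text{for all } t \in [k/2^n,(k+1)/2^n), \; n \in \N.
\end{equation}

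Next I would pass to the limit by integrating against an arbitrary continuous test function $\psi:[0,T] \to \R$. On the control side, the a.s.\ weak convergence $m^{(n)}(\omega) \to m(\omega)$ on $\sU \times [0,T]$ combined with continuity and boundedness of $(a,t) \mapsto \psi(t)a$ (using compactness of $\sU$) gives
\begin{equation}
\int_0^T \psi(t)\,\overline{m}_t^{(n)}(\omega)\,dt \;\longrightarrow\; \int_0^T \psi(t)\,\overline{m}_t(\omega)\,dt \quad \text{a.s.},
\end{equation}
and bounded convergence together with Fubini then yields $\int_0^T \psi(t)\E[\overline{m}_t^{(n)}]\,dt \to \int_0^T \psi(t)\E[\overline{m}_t]\,dt$. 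On the parametrizing side, the weak convergence $\eta^{(n)} \xrightarrow{\sL} \eta$ on the compact space $\sU \times [0,T]$ applied to the continuous bounded test function $(a,t) \mapsto \psi(t)a$ gives $\int_0^T \psi(t)\overline{\eta}_t^{(n)}\,dt \to \int_0^T \psi(t)\overline{\eta}_t\,dt$. Combining with the pre-limit identity forces
\begin{equation}
\int_0^T \psi(t)\E[\overline{m}_t]\,dt \;=\; \int_0^T \psi(t)\overline{\eta}_t\,dt
\end{equation}
for every continuous $\psi$, from which a standard density argument yields $\E[\overline{m}_t] = \overline{\eta}_t$ for Lebesgue-a.e.\ $t \in [0,T]$.

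The main subtlety is the measurability and integrability needed to write $t \mapsto \E[\overline{m}_t]$ as a Borel function and to apply Fubini. This is addressed at the end of Step 1 of the theorem's proof, where the disintegration $m(\omega)(da,dt) = m_t(\omega)(da)\,dt$ is constructed with $m_t$ progressively measurable, so by the cited \cite[Theorem 3.1]{florescu2012young} the map $t \mapsto \E[\overline{m}_t]$ is Borel measurable and bounded (since $\sU$ is compact). The remaining steps are routine once the pre-limit equality is in hand, so the essential content of the lemma is really a clean inheritance of the discrete-time MFG consistency through the weak limit.
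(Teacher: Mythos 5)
Your proof is correct and follows essentially the same route as the paper's: both rest on the exact discrete-time consistency identity $\E[\overline{m}^{(n)}_t]=\overline{\eta}^{(n)}_t$, transfer it to the limit via the a.s.\ weak convergence of $m^{(n)}$ (plus dominated convergence) on one side and the weak convergence of $\eta^{(n)}$ on the other, and finish with a density argument in $t$. The only difference is organizational — you test against continuous $\psi$ and then invoke density, whereas the paper fixes a Borel set $B$, approximates $\1_B$ in $L^1$ by continuous functions, and packages everything into a single five-term triangle inequality.
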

\begin{proof}
As discussed earlier, Borel measurability of the map $t \mapsto E[\overline{m}_t]$ follows from admissibility of the control process $m$. Observe that for any Borel set $B \subseteq [0,T]$ we can find a sequence $C([0,T];\R) \ni f_l \xrightarrow{l \to \infty} \1_B$ in $L^1$ with $\norm{f_l}_\infty \leq 1$ for every $l$, from which, using the triangle inequality, we see that
\begin{align*}
\Big| \int_B  E[\overline{m}_t] dt - \int_B \overline{\eta}_t dt \Big| &\leq \Big| \int \1_B(t) E[\overline{m}_t] dt - \int f_l(t) E[\overline{m}_t] dt \Big| \\
&+ \Big| \int f_l(t) E[\overline{m}_t] dt - \int f_l(t) E[\overline{m}_t^{(n)}] dt \Big| \\   
&+ \Big| \int f_l(t) E[\overline{m}_t^{(n)}] dt - \int \int f_l(t) a \eta_t^{(n)}(da) dt \Big| \\ 
&+ \Big| \int \int f_l(t) a \eta_t^{(n)}(da) dt   - \int \int f_l(t) a \eta_t^{}(da) dt \Big| \\ 
&+ \Big| \int \int f_l(t) a \eta_t(da) dt   - \int \int \1_B(t) a \eta_t(da) dt \Big| \\
& \xrightarrow{n,l \to \infty} 0,
\end{align*}
where terms one and five converge due to the $L^1$ approximation, term three is identically zero by consistency of the discrete-time chains, and term four converges to zero by definition of weak convergence. For the second term, we rewrite as
\begin{align*}
\Big|  \int f_l(t) \int_{\Omega} \int a m_t(\omega)(da) dP(\omega) dt - \int f_l(t) \int_{\Omega} \int a m_t^{(n)}(\omega)(da) dP(\omega) dt \Big| \\
= \Big| \int_{\Omega} \Big[ \int \int f_l(t)    a m_t(\omega)(da)dt -  \int \int f_l(t)    a m_t^{(n)}(\omega)(da) dt \Big] dP(\omega) \Big| \\
= \Big| \int_{\Omega} \Big[ \int f_l(t) a m(\omega)(da,dt) -  \int f_l(t)    a m^{(n)}(\omega)(da,dt) \Big] dP(\omega) \Big|,
\end{align*}
from which we see that the integrand converges to zero for almost every $\omega$ since $m_t^{(n)}(\omega) \xrightarrow{n \to \infty} m_t(\omega)$ for $P$-almost every $\omega$ in the topology of weak convergence. An application of the Dominated Convergence Theorem yields that term two also converges to zero. Consistency therefore follows.   
\end{proof}

\textbf{Step 4 (Establishing Optimality):}
For a fixed discretization integer $n$ and sequence of measures $\eta^{(n)} \coloneqq (\eta^{(n)})_{k=0}^{T2^n - 1}$, the expected cost of an adapted control chain $a \coloneqq (a_k)_{k=0}^{2^nT}$ and the value function of the discrete-time system (dynamics given by Scheme 1 in Definition \ref{bernouli_process_def} with $\eta^{(n)}$  the parameterizing sequence) are denoted by
\begin{align*}
w^{(n)}(k,x,a,\eta^{(n)}) = E[\phi(x_{2^nT}^{(n),a} )|x_k^{(n),a} = x], \quad v^{(n)}(k,x,\eta^{(n)}) = \sup_{a}w^{(n)}(k,x,a,\eta^{(n)}),
\end{align*}
where the superscript $a$ on the state emphasizes that $a$ is the driving control. Analogously, we use capital letters for the equivalent continuous time quantities
\begin{align*}
W(t,x,m,\eta) = E[\phi(X_{T}^m)|X_t = x], \quad V(t,x,\eta) = \sup_{m}W(t,x,m,\eta),
\end{align*}
where this time $m \coloneqq (m_t)_{t=0}^{T}$ denotes a relaxed admissible (possibly random) control process and the underlying probability space may vary with $m$. The dynamics of $X$ are of course constrained to be a jump process of controlled jump intensity from Definition \ref{cont_mfg_def}. The following lemma (which follows the construction for controlled diffusion in \cite[Theorem 3.5.2]{kushner2012weak}) establishes that allowing for relaxed controls does not improve the value function. (We will of course be interested in the map $t \mapsto \lambda(t,\eta_t)$ for a fixed flow of measure $(\eta_t)_{t \in [0,T]}$, where the objective is the maximization of expectation of continuous terminal wealth utility).

\begin{lemma} \label{chatlemma}
(Chattering Lemma) Consider the continuous-time intensity control problem from Definition \ref{cont_mfg_def} with compact action space $\sU$ and time-varying intensity function $f_t:\sU \to \R_{\geq 0}$, assumed continuous for each $t \in [0,T]$.  For any $\gamma > 0$, there exists a finite set $\sU^\gamma \coloneqq \{\alpha_1^\gamma,\cdots,\alpha_k^\gamma\} \subseteq \sU$ such that, for any admissible relaxed control $m$ and associated jump and state processes $N$ and $X$ all defined on some probability space $(\Omega,\sF,P)$, there exists a piecewise constant control $u_m^\gamma$, with discontinuities only at dyadic rationals of a fixed order (and possibly defined on another probability space), such that
\begin{align}
\big| W(t,x,m) - W(t,x,u_m^\gamma) \big| < \gamma.
\end{align}
\end{lemma}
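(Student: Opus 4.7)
The plan is to combine a discretization of the action space with a dyadic time discretization, and then use a standard chattering construction that replaces the relaxed control by a rapidly switching strict control which, over small time windows, produces approximately the same time-averaged intensity. I would then invoke Watanabe's characterization (quoted earlier in the section) to argue that the laws of the resulting state and jump processes are close, and finally transfer this closeness to the value $W$ using continuity and boundedness of $\phi$.

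First, by compactness of $\sU$, I would choose an $\epsilon$-net $\sU^\gamma = \{\alpha_1^\gamma,\ldots,\alpha_k^\gamma\}$ and a Borel partition $\{A_i\}_{i=1}^k$ with $\alpha_i^\gamma \in A_i$ and $\mathrm{diam}(A_i) < \epsilon$. Uniform continuity of $(t,\alpha)\mapsto f_t(\alpha)$ on $[0,T]\times \sU$ (in the application, $f_t(\alpha) = \int\lambda(\alpha,h)\,\eta_t(dh)$ with $\lambda$ continuous on the compact set $\sU\times\sU$) yields $|f_t(\alpha) - f_t(\alpha_i^\gamma)| < \epsilon'$ for $\alpha\in A_i$, uniformly in $t$, with $\epsilon'\downarrow 0$ as $\epsilon\downarrow 0$. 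Next, fix a dyadic integer $M = 2^{N}$ and partition $[0,T]$ into intervals $I_j = [jT/M,(j+1)T/M)$ of length $\Delta = T/M$. Given any admissible relaxed control $m$, define the (random) discrete probability measure on $\sU^\gamma$ by
\[
\bar m_j^\gamma(\{\alpha_i^\gamma\}) \coloneqq \frac{1}{\Delta}\int_{I_j} m_s(A_i)\,ds, \qquad i=1,\ldots,k,
\]
and let $u_m^\gamma \equiv \alpha_i^\gamma$ on the $i$-th of $k$ consecutive sub-intervals of $I_j$ of lengths $\bar m_j^\gamma(\{\alpha_i^\gamma\})\,\Delta$. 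Snapping the cumulative sub-interval endpoints to a sufficiently fine dyadic refinement yields a piecewise-constant $\sU^\gamma$-valued control with switching times on a fixed dyadic grid, as required by the lemma, at the cost of an additional $O(1/\text{refinement})$ error.

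To bound $|W(t,x,m) - W(t,x,u_m^\gamma)|$, I would construct coupled versions of $(X,N)$ and $(X^\gamma,N^\gamma)$ on a possibly enlarged probability space. The drift contributions satisfy
\[
\Big|\int_0^t \int_{\sU} c\alpha\,m_s(d\alpha)\,ds - \int_0^t c\,u_m^\gamma(s)\,ds\Big| = O(\epsilon) + O(\Delta),
\]
uniformly in $t$, and the integrated intensities differ by $O(\epsilon') + O(\omega_f(\Delta))$, where $\omega_f$ is the modulus of continuity of $t\mapsto f_t$. By Watanabe's theorem the law of a unit-jump counting process is determined by its integrated stochastic intensity, so a tightness/Skorokhod argument of the type already used in Step 1 of the proof of Theorem \ref{main_conv_th} identifies $\sL(X^\gamma,N^\gamma)$ with $\sL(X,N)$ in the weak limit as $\epsilon,\Delta\downarrow 0$. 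Continuity and boundedness of $\phi$ then upgrade this to $|\E[\phi(X_T)] - \E[\phi(X_T^\gamma)]| < \gamma$ for $\epsilon,\Delta$ small enough; the conditional version, conditioning on $X_t=x$, follows by running the same argument on the sub-problem on $[t,T]$.

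The hard part will be justifying that the rapid oscillations of the chattered intensity at frequency $\sim 1/\Delta$ between the values $f_s(\alpha_i^\gamma)$ leave the limiting law of $N^\gamma$ invariant; this is exactly what Watanabe's characterization delivers, since the compensated process $N_t^\gamma - \int_0^t f_s(u_m^\gamma(s))\,ds$ remains a martingale whose compensator is close in supremum norm to $\int_0^t\int f_s\,dm_s\,ds$. A secondary subtlety is that the chattering schedule on $I_j$ depends on the behaviour of $m$ over the whole interval, so $u_m^\gamma$ is not adapted to the natural filtration of $N^\gamma$; the weak formulation noted in Remark \ref{weak_well_posedness}, where the filtered probability space may vary with the control, absorbs this issue cleanly.
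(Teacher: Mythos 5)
Your proposal is correct and follows essentially the same route as the paper: partition $\sU$ into small-diameter cells with representatives, allocate time within each length-$\Delta$ window in proportion to the mass $m$ places on each cell, round switching times to a dyadic grid, and identify the limiting joint law of $(X^\gamma,N^\gamma)$ with that of $(X,N)$ via tightness and the intensity characterization. The only substantive difference is that the paper implements the schedule computed from $[i\Delta,(i+1)\Delta)$ on the \emph{next} interval to keep the approximating control non-anticipative, whereas you keep it on the same interval and absorb the adaptedness issue into the weak formulation; both are legitimate here.
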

\begin{proof}
We first prove it without the dyadic rational requirement. First, fix $\rho > 0$ and let \\ $B_1^\rho,B_2^\rho,\cdots,B_{N_\rho}^\rho$ denote a disjoint partition of $\sU$ with the diameter of each of the sets not exceeding $\rho$. For a stochastic admissible control $m$, a fixed $\Delta > 0$, and an integer $i$, define the following (random) numbers:
\begin{align*}
r_{i,j}^{\rho,\Delta}(\omega) \coloneqq \int_{i\Delta}^{(i+1)\Delta}m_s(\omega)(B_j^\rho)ds = m(\omega)(B_j^\rho \times [i\Delta, (i+1) \Delta]) \quad \text{for} \quad j=1,2,\cdots,N_\rho.
\end{align*}
Observe that, by construction, we have
%\begin{align}
$\Delta = \sum_{j=1}^{N_\rho}r_{i,j}^{\rho,\Delta}$. 
%\end{align}
Next, subdivide the next interval (to preserve causality) $[(i+1)\Delta,(i+2)\Delta)$ into intervals of length given by the $r_{i,j}^{\rho,\Delta}$'s. On the interval of length $r_{i,j}^{\rho,\Delta}$ define $u_m^{\gamma,\rho}$ to take on the value $\alpha_j^\rho$. Note that here, the $r$'s are dependent on $\omega$ and so the resulting ordinary control is itself stochastic. Let $m^{\rho, \Delta}$ denote the relaxed representation of this control. Note that the piecewise constant control is defined on the original probability space $(\Omega,\sF,P)$. By possibly augmenting the probability space or modifying the probability measure $P$ (recall Remark \ref{weak_well_posedness}), let $N^{\rho,\Delta}$ denote another jump process with stochastic intensity given by $\lambda_t^{\rho,\Delta} = \int_{\sU}f_t(\alpha)m_t^{\rho,\Delta}(d \alpha)$ and let $X^{\rho, \Delta}$ and $N^{\rho, \Delta}$ satisfy 
\begin{align}
X_t^{\rho, \Delta} = X_0 - \int_{0}^{t}c \alpha m_s^{\rho,\Delta}(d\alpha)ds + rN_t^{\rho,\Delta}. 
\end{align}
By construction, the laws $\{\sL(m^{\rho,\Delta},N^{\rho,\Delta}) : \rho > 0,\Delta > 0 \}$ are tight and we let $\tilde{m}$ and $\tilde{N}$ denote a limit point. Clearly, $\sL(\tilde{m}) = \sL(m)$ by construction, thus it follows that the limit point of the above is in fact the original law $\sL(m,N)$. From here, we simply observe that for any fixed $\rho$ and $\Delta$, we can define an integer $z(\rho,\Delta)$ and round the jump times of the control up to the nearest $z(\rho,\Delta)$'th order dyadic rational (i.e. an element in $0,1/2^{z(\rho,\Delta)},2/2^{z(\rho,\Delta)},3/2^{z(\rho,\Delta)},\cdots$). Taking $z(\rho,\Delta) \xrightarrow{\rho,\Delta \to 0} \infty$ sufficiently fast, the arguments above remain true.  
\end{proof}
It follows from the above that there is no gain in expanding the class of controls in terms of improving the value function. We prove another useful lemma:

%\Break

\begin{lemma}
Recall that $V(t,x,\eta)$ denotes the value function for the system with a fixed measure flow $\eta_{t \in [0,T]}$. If $\eta^{(n)} \xrightarrow{\sL, n \to \infty} \eta$, then we have that $V(\cdot,\cdot,\eta^{(n)}) \xrightarrow{n \to \infty} V(\cdot,\cdot,\eta)$ point-wise. 
\end{lemma}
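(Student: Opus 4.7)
The plan is to establish pointwise convergence by proving both $\limsup_n V(t,x,\eta^{(n)}) \leq V(t,x,\eta)$ and $\liminf_n V(t,x,\eta^{(n)}) \geq V(t,x,\eta)$ for each fixed $(t,x)$. Both arguments reuse machinery developed earlier in this section.

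For the upper bound, I would for each $n$ pick an admissible tuple with $W(t,x,m^{(n)},\eta^{(n)}) \geq V(t,x,\eta^{(n)}) - 1/n$, then run essentially the same tightness-plus-Skorokhod extraction as in Steps 1--2 of the proof of Theorem \ref{main_conv_th}. Since $\sU$ is compact and $\lambda$ is bounded, the joint laws $\sL(X^{(n)},m^{(n)},N^{(n)})$ are tight, and an almost-sure limit $(X,m,N)$ on a common probability space can be shown to satisfy the dynamics of Definition \ref{cont_mfg_def} driven by $\eta$ rather than the $\eta^{(n)}$; the only subtle point is that the compensator now involves the double integral $\int\int \lambda(\alpha,h) m_r^{(n)}(d\alpha)\eta_r^{(n)}(dh)dr$ with both factors varying, but joint weak convergence of $(m^{(n)},\eta^{(n)})$ to $(m,\eta)$ together with continuity and boundedness of $\lambda$ handles this. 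Continuity and boundedness of $\phi$ then deliver $\limsup_n V(t,x,\eta^{(n)}) \leq E[\phi(X_T)] \leq V(t,x,\eta)$ along the chosen subsequence, and a standard diagonal argument extends the conclusion.

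For the lower bound, I would fix $\varepsilon > 0$, take an $\varepsilon$-optimal relaxed control $m^*$ for $V(t,x,\eta)$, and apply the Chattering Lemma (Lemma \ref{chatlemma}) with intensity $f_s(\alpha) = \int \lambda(\alpha,h)\eta_s(dh)$ to obtain a piecewise constant control $u^\varepsilon$ on a fixed dyadic grid with $|W(t,x,m^*,\eta) - W(t,x,u^\varepsilon,\eta)| < \varepsilon$. Being piecewise constant, $u^\varepsilon$ is admissible for any background flow, so the inequality $V(t,x,\eta^{(n)}) \geq W(t,x,u^\varepsilon,\eta^{(n)})$ reduces matters to continuity of $\eta' \mapsto W(t,x,u^\varepsilon,\eta')$ at $\eta$; chaining with the chattering estimate yields $\liminf_n V(t,x,\eta^{(n)}) \geq V(t,x,\eta) - 2\varepsilon$, and letting $\varepsilon \downarrow 0$ concludes.

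The continuity claim is where I expect the most work. With $u^\varepsilon$ fixed, the controlled jump process $N$ is a Cox process whose dependence on $\eta'$ enters only through the cumulative intensity $\Lambda^{\eta'} \coloneqq \int_t^T \int \lambda(u_s^\varepsilon,h) \eta_s'(dh)\,ds$. The integrand $(s,h) \mapsto \lambda(u_s^\varepsilon,h)$ is bounded and continuous in $h$ but has finitely many discontinuities in $s$ at the dyadic jump times of $u^\varepsilon$; sandwiching it between continuous bounded functions and applying the Portmanteau theorem to the product measures $\eta_s^{(n)}(dh)ds$ on $[t,T]\times \sU$ should yield $\Lambda^{\eta^{(n)}} \to \Lambda^\eta$. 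Convergence in law of $N_T - N_t$ and hence of $X_T$ follows, and continuity and boundedness of $\phi$ close the argument.
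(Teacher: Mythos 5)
Your proposal is correct in outline, but it takes a genuinely different route from the paper. The paper's proof is a single short argument: it fixes one control process $a$ on a probability space supporting jump processes with both intensities $\int\lambda(a_t,h)\eta_t(dh)$ and $\int\lambda(a_t,h)\eta_t^{(n)}(dh)$, approximates the path $a(\omega)$ in $L^1$ by continuous functions $f_k$, and uses the uniform Lipschitz property of $\lambda(\cdot,h)$ together with weak convergence of $\eta^{(n)}$ to show that the integrated intensities converge pathwise; it then concludes that the expected terminal rewards, and hence the value functions, converge. You instead split the claim into $\limsup_n V(t,x,\eta^{(n)})\leq V(t,x,\eta)$ and $\liminf_n V(t,x,\eta^{(n)})\geq V(t,x,\eta)$: the upper bound via a tightness/Skorokhod extraction of nearly optimal tuples for the $\eta^{(n)}$-problems and identification of the limit compensator (re-running Steps 1--2 of Theorem \ref{main_conv_th}), and the lower bound via the Chattering Lemma plus continuity of $\eta'\mapsto W(t,x,u^\varepsilon,\eta')$ for a fixed piecewise-constant control. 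Your decomposition is heavier but buys something real: the paper's argument, as written, establishes convergence of $W(t,x,a,\eta^{(n)})\to W(t,x,a,\eta)$ for each \emph{fixed} control, which directly yields only the $\liminf$ inequality; passing to the supremum over controls for the $\limsup$ direction requires either uniformity of that convergence over controls (the $L^1$ approximants $f_k$ depend on $a(\omega)$, so this is not immediate) or exactly the compactness argument you supply. Conversely, the paper's route, when it applies, is far more economical and avoids re-proving the martingale characterization of the limit jump process. Both arguments rest on the same two ingredients --- boundedness/continuity of $\lambda$ and $\phi$, and weak convergence of $\eta^{(n)}$ tested against (approximately) continuous integrands --- so your proposal is a legitimate, and in one direction more complete, alternative.
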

\begin{proof}
Let $(a_t)_{t \in [0,T]}$ be a control process defined on some probability space $(\Omega,\sF,P)$ which supports two jump processes {with} stochastic rates $\int \lambda(a_t,h)\eta_t(dh)$ and $\int \lambda(a_t,h)\eta_t^{(n)}(dh)$. Fix $\omega \in \Omega$ and let $C([0,T];[0,L]) \ni f_k \xrightarrow{L^1,k \to \infty} a(\omega)$. For any $t \in [0,T]$
\begin{align}
\Big| \int_{t}^{T} \int &\lambda(a_s(\omega),h)\eta_s(dh) ds - \int_{t}^{T} \int \lambda(a_s(\omega),h)\eta_s^{(n)}(dh) ds \Big| \\
&\leq \Big| \int_{t}^{T} \int \lambda(a_s(\omega),h)\eta_s(dh) ds - \int_{t}^{T} \int \lambda(f_k(s),h)\eta_s(dh) ds \Big|\\ 
&+ \Big| \int_{t}^{T} \int \lambda(f_k(s),h)\eta_s(dh) ds - \int_{t}^{T} \int \lambda(f_k(s),h)\eta_s^{(n)}(dh) ds \Big| \\
&+ \Big| \int_{t}^{T} \int \lambda(f_k(s),h)\eta_s^{(n)}(dh) ds - \int_{t}^{T} \int \lambda(a_s(\omega),h)\eta_s^{(n)}(dh) ds \Big|,
\end{align}
with terms one and three going to zero using the uniform Lipschitz condition on $\lambda(\cdot,h)$ and the $L^1$ convergence, and the second term going to zero by definition of weak convergence. Because the intensities converge to one another almost surely, so does the expected terminal reward, establishing the convergence of the claim. 
\end{proof}

We now connect the value functions of discrete time (considering Scheme 1 from Definition \ref{bernouli_process_def}) and continuous time systems. When using a flow $(\eta_t)_{t \in [0,T]}$ of control measure to specify transition dynamics of the discrete time systems, we discretize the measures by averaging, as follows:
\begin{align} \label{meas_disc}
\eta(n)_k(B) = 2^n \int_{k/2^n}^{(k+1)/2^n} \eta_t(B) dt \text{        for       } k = 0,1,\cdots,2^nT-1, \quad \quad \eta(0)^{(n)} = \eta_0.
\end{align}

\begin{lemma} \label{aproxlem_correct}
Let $(a_t)_{t=0}^{T}$ denote a control process which is piece-wise constant, right-continuous with left limits, and jumps only on dyadic rationals of order $n'$. Then letting $a^{(n)}$ denote its discretization (i.e. sampling) on dyadic rationals of order $n$ we have that 
\begin{align} \label{aprox_eq1}
\Big| w^{(n)}(a^{(n)},\eta) - w^{(n)}(a^{(n)},\eta^{(n)}) \Big| \xrightarrow{n \to \infty} 0,
\end{align}
and
\begin{align} \label{aprox_eq2}
|w^{(n)}(a^{(n)},\eta) - W(a,\eta)| \xrightarrow{n \to \infty} 0.
\end{align} 
\end{lemma}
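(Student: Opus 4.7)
The plan is to treat \eqref{aprox_eq1} by a coupling argument on the Bernoulli jumps of the two discrete-time chains (which share the control $a^{(n)}$ but have different jump probabilities), and \eqref{aprox_eq2} by a weak-convergence argument exploiting the Poisson (Cox) limit of sums of small-probability Bernoullis, paralleling the construction in Step 1.

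First, for \eqref{aprox_eq1} I would observe that the two chains in \eqref{discrete_time_family} evolve under the same piecewise-constant control $a^{(n)}$ and differ only through the jump probabilities
\begin{align*}
\alpha_k^{(n),\eta} = \tfrac{1}{2^n}\int \lambda(a_k^{(n)},h)\,\eta(n)_k(dh), \qquad \alpha_k^{(n),\eta^{(n)}} = \tfrac{1}{2^n}\int \lambda(a_k^{(n)},h)\,\eta_k^{(n)}(dh).
\end{align*}
Conditionally on the (deterministic) control sequence, the jump indicators are independent Bernoullis, so both chains can be coupled on a common probability space driven by the same uniform randomness to give
\begin{align*}
d_{TV}\bigl(\sL(x_{2^nT}^{(n),\eta}),\sL(x_{2^nT}^{(n),\eta^{(n)}})\bigr) \leq \sum_{k=0}^{2^nT-1}\bigl|\alpha_k^{(n),\eta}-\alpha_k^{(n),\eta^{(n)}}\bigr|.
\end{align*}
Since $\phi$ is bounded, \eqref{aprox_eq1} is bounded by $2\|\phi\|_\infty$ times that sum. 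Rewriting the sum as a time integral of $|\int \lambda(a^{(n)}_{\lfloor 2^n s\rfloor},h)(\eta(n)_{\lfloor 2^n s\rfloor}-\eta^{(n)}_{\lfloor 2^n s\rfloor})(dh)|$ over $[0,T]$, and noting that both $\eta(n)$ (by definition \eqref{meas_disc}) and $\eta^{(n)}$ converge weakly to $\eta$ on $[0,T]\times\sU$, the continuity and uniform boundedness of $\lambda$ yield the vanishing limit.

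Second, for \eqref{aprox_eq2} the strategy is to show $\sL(x^{(n)}_{2^nT})\xrightarrow{\sL} \sL(X_T)$, where $X$ is the continuous-time state process driven by (the relaxed version of) $a$ under $\eta$, and then to conclude via bounded convergence using continuity and boundedness of $\phi$. I would interpolate $x^{(n)}$ into a cadlag process $X^{(n)}$ exactly as in Step 1 of the main proof and establish $(X^{(n)},N^{(n)})\xrightarrow{\sL}(X,N)$, with $X$ satisfying \eqref{state_eq_cont} for the relaxed control $m_t=\delta_{\{a_t\}}$ and $N$ a Cox process of stochastic intensity $\int \lambda(a_t,h)\,\eta_t(dh)$. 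Tightness follows as in Step 1, and identification of the limit proceeds by Watanabe's martingale characterization applied to $N^{(n)}_t - \int_0^t \int \lambda(a^{(n)}_s,h)\,\eta(n)_{\lfloor 2^n s\rfloor}(dh)\,ds$. For $n\geq n'$, the sampled control $a^{(n)}$ agrees with $a$ at all dyadic times of order $n$, so the Riemann drift sum converges to $-c\int_0^t a_s\,ds$ and, by bounded convergence together with continuity of $\lambda$, the compensator converges to $\int_0^t\int \lambda(a_s,h)\,\eta_s(dh)\,ds$.

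The main obstacle is the weak-limit identification underlying \eqref{aprox_eq2}, namely verifying that the limiting jump process is genuinely a Cox process with the claimed intensity rather than merely having the correct expected jump count. This is handled by passing to the limit in Watanabe's martingale characterization, leveraging the uniform boundedness and joint continuity of $\lambda$ to swap limits and integrals; the structure mirrors Step 2 of the main proof. Continuity and boundedness of $\phi$ then upgrade weak convergence of $X^{(n)}_T$ to convergence of expectations, proving \eqref{aprox_eq2}.
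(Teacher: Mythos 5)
Your proposal is correct and follows essentially the same route as the paper: for \eqref{aprox_eq1} both arguments reduce to comparing the accumulated jump probabilities $\sum_k \alpha_k^{(n)}$ under $\eta(n)$ versus $\eta^{(n)}$ over each interval where the control is constant and invoking weak convergence of the measure flows (your total-variation coupling merely makes explicit the step, left implicit in the paper, that closeness of jump probabilities implies closeness of expected terminal utilities), and for \eqref{aprox_eq2} your interpolation-plus-Watanabe argument is a spelled-out version of the Poisson limit theorem the paper invokes in one line.
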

\begin{proof}
For the first limit, we complete the proof by bounding the difference between the expected rewards earner corresponding to the a continuous-time interval of the form $[l/2^{n'},(l+1)/2^{n'})$. Note that for a given $\omega$, $a_{l2^{n-n'}}^{(n)}(\omega)$ is constant in $n$. For $n > n'$ we have
\begin{align*}
&\Big| \sum_{k=0}^{2^{n-n'}-1} \frac{1}{2^n} \int_{\sU}  \lambda(a_{l2^{n-n'}}^{(n)},h)\eta(n)_{l2^{n-n'} + k}(dh)  - \sum_{k=0}^{2^{n-n'}-1} \frac{1}{2^n} \int_{\sU}  \lambda(a_{l2^{n-n'}}^{(n)},h)\eta^{(n)}(n)_{l2^{n-n'} + k}(dh)  \Big| \\
=& \Big| \sum_{k=0}^{2^{n-n'} - 1}  \int_{l/2^{n'} + k/2^n}^{l/2^{n'} +(k+1)/2^n} \int  \lambda(a_{l2^{n-n'}}^{(n)},h)\eta_{t}(dh)dt  - \sum_{k=0}^{2^{n - n'}-1}  \int_{l/2^{n'} + k/2^n}^{l/2^{n'} + (k+1)/2^n} \int_{\sU} \lambda \Big(a_{l2^{n-n'}}^{(n)}, h  \Big)  \eta^{(n)}_t(dh) dt \Big| \\
= &\Big| \int_{l/2^{n'}}^{(l+1)/2^{n'}} \int  \lambda(a_{l2^{n-n'}}^{(n)},h)\eta_{t}^{(n)}(dh)dt  -  \int_{l/2^{n'}}^{(l+1)/2^{n'}} \int_{\sU} \lambda \Big(a_{l2^{n-n'}}^{(n)}, h  \Big)  \eta_t(dh) dt \Big| \xrightarrow{n \to \infty} 0,
\end{align*}
from which the result follows. The second limit follows immediately from the Poisson limit theorem.  
\end{proof}

\begin{proof}[Proof of Theorem \ref{main_conv_th}]
We can finally conclude optimality. Recall that $\eta$ and $m$ denote the limit flow of measure resp. control obtained by taking a weak limit of the interpolated discrete-time approximations. Suppose now for a contradiction that $m$ is not optimal for $\eta$. Then using the Chattering Lemma \ref{chatlemma} there exists a piece-wise constant control $\tilde{a}$ (with jumps on finite dyadics no finer than of order $n'$) such that $W(m,\eta) < W(\tilde{a},\eta)$. We sample $\tilde{a}$ for each $n$ at $n$-th order dyadics to obtain a discrete control process $\tilde{a}^{(n)}$. Combining the two limits established in Lemma \ref{aproxlem_correct}, we have that
\begin{align}
\lim_{n \to \infty} w^{(n)}(\tilde{a}^{(n)},\eta^{(n)}) = W(\tilde{a},\eta) > W(m,\eta) = \lim_{n \to \infty} w^{(n)}(a^{(n)},\eta^{(n)}),
\end{align}
which for all $n$ sufficiently large, contradicts the optimality of $a^{(n)}$ for the fixed sequence of measures $\eta^{(n)}$ (note that the discretization scheme from \eqref{meas_disc} applied to $(\eta_t^{(n)})_{t \in [0,T]}$ recovers the original sequence $(\eta^{(n)}_k)_{k=0}^{2^nT-1}$ that was interpolated to obtain $(\eta_t^{(n)})_{t \in [0,T]}$). As such, we conclude that $m$ is indeed optimal for $\eta$. 

\Break

We have established the first assertion of the theorem. For the second assertion regarding sharpness of the limit control $m$, assume $\phi$ is strictly increasing and that $\lambda(\cdot,h)$ is strictly concave for each $h$. Let $(m,\eta)$ denote a relaxed MFG equilibrium. Fix $t \in [0,T]$, define $a^*_t = \int a m_t(da)$ where $dm = dm_tdt$, and note that 
\begin{align}
\int_{\sU} \lambda(a_t^*,h)\eta_t(dh) \geq \int_{\sU} \int_{\sU} \lambda(a,h)m_t(da)\eta_t(dh).
\end{align}
Thus, letting $X^{a^*}$ and $X^{m^*}$ denote the wealth processes (possibly defined on distinct probability spaces) driven by the controls $\delta_{\{a_t^*\}}$ and $m$ respectively, we have that $E[\phi(X_T^{a^*}) ] \geq E[\phi(X_T^{m}) ]$, with equality if and only if $m_t$ is a Dirac mass for Lebesgue almost every $t \in [0,T]$ almost surely. By optimality of $m$, it follows immediately that $m$ is itself (up to redefining on Lebesgue null sets) a sharp control. This concludes the proof. 

\end{proof}

\section{Cryptocurrency Mining MFG}\label{sec:crypto}
We now return to the cryptocurrency mining MFG model from \cite{li2019mean}. We provide MFG existence results for both the original continuous-time model and a discrete-time analog. For the latter, we numerically illustrate that damped fixed-point iterations of the discrete-time Bellman and Kolmogorov equations converge to an MFG equilibrium, and we recover the qualitative preferential attachment behavior obtained from the continuous-time model in \cite{li2019mean} under CRRA wealth utility. We note that the numerical scheme utilized in this paper is distinct from the fixed point iterations used to solve the continuous-time MFG in \cite{li2019mean}. There, finite difference methods are used to solve the HJB and Kolmogorov PDEs, whereas we exactly solve a space-discretized discrete-time game, and we need not scale the dynamics so that time steps correspond to small intervals of time.

\subsection{Discrete-Time Model}
We begin by considering a family of discrete-time models parametrized by $n \in \N$ and $\epsilon \geq 0$. Using Scheme 2 from Definition \ref{bernouli_process_def}, we define
\begin{align} \label{regularized_intensity}
\lambda^{(\epsilon)}(a,h) \coloneqq \frac{a}{a + hM + \epsilon} \1_{\{a > 0\}},
\end{align}
for a constant $M > 0$ (which proxies for the number of miners, see \cite{li2019mean} for details). Given a fixed sequence of probability measures $(\zeta_k^{(n,\epsilon)})_{k=0}^{2^nT - 1} \subseteq \sP(\sA)$, the transition dynamics for the wealth process $(x_k^{(n,\epsilon)})_{k=0}^{2^nT}$ are given by
\begin{align} \label{discrete_time_family2}
x_{k+1}^{(n,\epsilon)} \sim \frac{\lambda^{(\epsilon)}(a_k^{(n,\epsilon)},\overline{\zeta}_k^{(n,\epsilon)})}{2^n}  \delta_{ \{x_{k}^{(n,\epsilon)} - (ca_k^{(n,\epsilon)}/2^n) + r  \} } + \Big(  1 -  \frac{\lambda^{(\epsilon)}(a_k^{(n,\epsilon)},\overline{\zeta}_k^{(n,\epsilon)})}{2^n}  \Big)   \delta_{ \{x_{k}^{(n,\epsilon)} - (ca_k^{(n,\epsilon)}/2^n)  \} },
\end{align}
for $k = 0,1,\cdots,T2^n - 1$. The control objective is the maximization of expected terminal utility of wealth, $E[\phi(x_{T2^n}^{(n,\epsilon)})]$. Assuming that the initial wealth law $\mu_0$ is compactly supported, that the action space is a compact interval $[0,L]$, and that the wealth utility $\phi$ is bounded (which is WLOG since agents can increase their wealth by at most $2^nTr$ over the course of the game), it is straightforward to check that, for $\epsilon > 0$, Theorem \ref{mainexistenceth} applies, guaranteeing the existence of an MFG equilibrium for the game arising from the above dynamics. The condition $\epsilon > 0$ is required in order to ensure continuity of the transition kernel. For the remainder of this section, we fix $n \in \N$ and will extract an MFG equilibrium as a limit of equilibria by taking $\epsilon \to 0$.   

Fix $n$ and $\epsilon > 0$, and let $\nu^{(n,\epsilon)} \coloneqq (\nu_k^{(n,\epsilon)})_{k=0}^{2^nT}$ denote a sequence of laws on $\sX \times \sA$ which characterize an MFG equilibrium (recall that the control policy is obtained via the disintegration as in Section 2) for the game arising from the dynamics in \eqref{discrete_time_family2}. For ease of notation, let $\mu_k^{(n,\epsilon)} = \nu_{k,1}^{(n,\epsilon)} $ and $\eta_k^{(n,\epsilon)} = \nu_{k,2}^{(n,\epsilon)}$ where $\nu_{k,1}^{(n,\epsilon)}$ and $\nu_{k,2}^{(n,\epsilon)}$ denote the state and control marginals of $\nu_k^{(n,\epsilon)}$, respectively. In this specific setup, $\sX = \R$ and $\sA = [0,L]$. Recall also that the optimal control 
$$a_k^{(n,\epsilon)} \sim \pi_k^{(n,\epsilon)}(\cdot|x_k^{(n,\epsilon)})\quad \mbox{where}\quad \nu_k^{(n,\epsilon)}(dx,da) = \pi_k^{(n,\epsilon)}(da|x)\mu_k^{(n,\epsilon)}(dx).$$ 
We now extract a weak limit $\nu^{(n,\epsilon)} \to \nu^{(n)}$ as $\epsilon \to 0$ and will establish that $\nu^{(n)}$ is an MFG equilibrium for the cryptocurrency mining model \eqref{discrete_time_family2} with $\epsilon = 0$. Since $n \in \N$ is fixed we suppress it from the superscript notation hereafter, and also write $\lambda = \lambda^{(0)}$ for simplicity. 

We begin by observing that if 
\begin{align}
B \coloneqq \{k \in \{0,1,\cdots,2^nT - 1 \}: \eta_k = \delta_{\{0\}} \}
\end{align}
is non-empty, then an optimal control for the original ($\epsilon = 0$) model with fixed background hash-rate $(\overline{\eta}_k)_{k \in \{0,1,2,\cdots,2^nT\} }$ will in general not exist: This follows because any non-zero hash rate will result in a unit jump intensity for the reward process, whereas zero hash-rate results in zero reward process intensity, hence zero hash-rate on $B$ is sub-optimal for any reasonable choice of terminal utility/initial condition. On the other hand, any control process that is non-zero on $B$ can be improved by making it even closer to zero while keeping it positive. Fortunately, assuming the following mild assumption, we can show that the limit hash-rate is indeed positive for every time $k$. 

\begin{assumption} \label{mildassumption}
Assume that $\phi$ is non-decreasing and that the initial wealth distribution is not supported on the set of maximizers for the function $\phi$, i.e.
\begin{align}
\mu_0 \{ \argmax_{y \in \R} \phi(y) \} < 1.
\end{align}
\end{assumption}

\begin{proposition} \label{tech_lemm_d}
Under Assumption \ref{mildassumption}, the limit (as $\epsilon \to 0$ with $n$ fixed but arbitrary) sequence of measures on the control space $(\eta_k)_{k=0}^{2^nT} = (\nu_{k,2})_{k=0}^{2^nT}  \subseteq \sP([0,L])$ is never a Dirac at zero. In fact, there exists some $d > 0$ such that $\overline{\eta}_k > d$ for every $k = 0,1,\cdots, 2^nT - 1$ and we can find such a $d$ that holds for any choice of $n \in \N$. 
\end{proposition}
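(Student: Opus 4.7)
The plan is to prove both assertions by contradiction, combining the MFG consistency condition $\overline{\eta}_k^{(n,\epsilon)} = E[a_k^{*}(X_k^{(n,\epsilon)})]$ with a first-order analysis of each individual agent's Bellman equation. Suppose $\overline{\eta}_k^{(n,\epsilon)} \to 0$ along some subsequence as $\epsilon \to 0$ (allowing $k,n$ to vary for the uniform-$d$ claim). The key observation is that each agent's best response scales like $\sqrt{\overline{\eta}_k}$ on a set of wealths of positive $\mu_k$-measure, which is incompatible with consistency when $\overline{\eta}_k$ is small.

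I would begin by establishing two uniform regularity properties of the value functions $V_{k+1}^{(n,\epsilon)}$ attached to the equilibria $\nu^{(n,\epsilon)}$. A backward induction on the Bellman operator, using the boundedness of the per-step drift and the jump size $r$, transfers a Lipschitz constant of $\phi$ to the $V_{k+1}^{(n,\epsilon)}$ uniformly in $n,k,\epsilon$; call it $V'_{*}$. Under Assumption \ref{mildassumption}, a positive $\mu_0$-mass of wealths sits strictly below $\sup \phi$, and non-decreasingness of $\phi$ provides $r_0 > 0$ with $\phi(x+r_0) > \phi(x)$ on this mass. I would propagate this strict advantage via a ``shadow-strategy'' comparison, coupling a phantom agent started $r$ above the real agent to the same control process so that the phantom's expected terminal utility exceeds the real's by a uniform amount. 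This yields constants $\beta^{*} > 0$ and $p^{*} > 0$, independent of $n,k,\epsilon$, such that the set $A_k := \{x : V_{k+1}^{(n,\epsilon)}(x+r) - V_{k+1}^{(n,\epsilon)}(x) \geq \beta^{*}\}$ satisfies $\mu_k(A_k) \geq p^{*}$.

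Next, I would analyze the individual Bellman equation at step $k$ with background $h = \overline{\eta}_k^{(n,\epsilon)}$. Differentiating the objective in $a$ (the factors of $1/2^n$ cancel) and using $\partial_a \lambda^{(\epsilon)}(a,h) = (hM+\epsilon)/(a+hM+\epsilon)^2$, the interior first-order condition yields
\[ a^{*}(x) \approx \sqrt{(hM+\epsilon)\,\beta_k(x)/(c\,V_{k+1}^{(n,\epsilon)'}(x))} - (hM+\epsilon), \]
with $\beta_k(x) := V_{k+1}^{(n,\epsilon)}(x+r) - V_{k+1}^{(n,\epsilon)}(x)$. Restricting to $x \in A_k$ and using $\beta_k \geq \beta^{*}$ and $V_{k+1}^{(n,\epsilon)'} \leq V'_*$, the consistency condition $E[a^{*}(X_k)] = h$ produces
\[ h \geq p^{*}\bigl(\sqrt{(hM+\epsilon)\beta^{*}/(cV'_*)} - (hM+\epsilon)\bigr), \]
which, for $\epsilon$ small, rearranges to $h \geq d := (p^{*})^2 M \beta^{*}/[(1+p^{*}M)^2 c V'_*] > 0$, a bound independent of $n,k,\epsilon$. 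Taking $\epsilon \to 0$ preserves the bound (weak convergence on the compact space $[0,L]$ preserves first moments), yielding both the qualitative and the uniform assertion.

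The main obstacle is justifying the FOC given the limited regularity of $V_{k+1}^{(n,\epsilon)}$. I would sidestep differentiation entirely by a direct increment comparison: for $a > 0$ and $x \in A_k$, using the non-decreasingness and Lipschitz properties of $V_{k+1}^{(n,\epsilon)}$,
\[ u(a) - u(0) \geq \tfrac{\lambda^{(\epsilon)}(a,h)}{2^n}\beta^{*} - \tfrac{c V'_* a}{2^n}, \]
so that optimizing the right-hand side over $a > 0$ produces the same square-root lower bound on $a^{*}(x)$ without invoking derivatives of $V$. A second delicacy is maintaining the uniform $\beta^{*} > 0$ across all $k$; I would handle this by a reachability argument quantifying the probability that a sub-optimal ``always hash at a fixed rate'' shadow policy started at $x+r$ terminates at a strictly larger $\phi$-value than the same policy started at $x$, for $x$ drawn from the set of initial wealths lying below $\sup \phi$.
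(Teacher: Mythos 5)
Your overall strategy (contradict the consistency condition $\overline{\eta}_k^{(\epsilon)}=E[a_k^{(\epsilon)}]$ by showing individual best responses must be much larger than a vanishing $\overline{\eta}_k^{(\epsilon)}$) is in the same spirit as the paper's, but the paper executes it differently and more cheaply: it perturbs the equilibrium control at the single time $k$ to $\sqrt{\epsilon\lor\overline{\eta}_k^{(\epsilon)}}\lor a_k^{(\epsilon)}$, bounds the original jump probability above by $\tfrac{1}{2^n(M+1)}$ via Jensen applied to the concave map $a\mapsto\lambda^{(\epsilon)}(a,h)$ together with consistency, shows the perturbed jump probability tends to $2^{-n}$, and notes the extra drift cost vanishes with $\epsilon$ — contradicting optimality without ever touching the value function's regularity or the pointwise best response. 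Your route, by contrast, has a genuine gap at its central step. From the inequality
\begin{equation}
u(a)-u(0)\;\geq\; \frac{\lambda^{(\epsilon)}(a,h)}{2^n}\,\beta^{*}-\frac{cV'_{*}a}{2^n}\eqqcolon g(a),
\end{equation}
maximizing $g$ at $\hat a=\sqrt{(hM+\epsilon)\beta^{*}/(cV'_{*})}-(hM+\epsilon)$ only yields $u(a^{*}(x))-u(0)\geq g(\hat a)>0$, i.e.\ that the best response is nonzero. It does \emph{not} yield $a^{*}(x)\geq \hat a$: the maximizer of a lower bound need not minorize the maximizer of $u$, and $u$ itself is neither concave nor differentiable in general (so the FOC version fails for the same reason). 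The cheap repair — pairing $g(\hat a)$ with the upper bound $u(a)-u(0)\leq \lambda^{(\epsilon)}(a,h)\overline{\beta}/2^n\leq a\overline{\beta}/(2^n(hM+\epsilon))$, where $\overline{\beta}$ is the global bound on increments of $V_{k+1}$ — only forces $a^{*}(x)\gtrsim (hM+\epsilon)\beta^{*}/\overline{\beta}$, a bound \emph{linear} in $h$; feeding that into consistency gives $h\geq p^{*}M h\beta^{*}/\overline{\beta}$, which is a contradiction only under the unverified condition $p^{*}M\beta^{*}/\overline{\beta}>1$. To recover the square-root bound you would have to show that $u$ is strictly increasing on an interval $[0,\kappa\sqrt{hM+\epsilon}\,]$ via a two-sided estimate of $u(a')-u(a)$, which is doable but is precisely the delicate step your write-up skips.

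Two further points. First, your uniform Lipschitz constant $V'_{*}$ for the value functions requires $\phi$ to be Lipschitz, which is not among the paper's hypotheses (only continuity, boundedness, and Assumption \ref{mildassumption} are assumed; the paper's argument needs only that the drift-induced cost perturbation, of size $c\sqrt{\nu(\epsilon)}/2^n$, vanishes, which follows from uniform continuity of $\phi$ on compacts). Second, your set $A_k$ is defined via the increment of $V_{k+1}$ at $x$, but the Bellman objective evaluates that increment at $x-ca2^{-n}$; the discrepancy is of order $cL2^{-n}$, which is not small for the fixed small $n$ that the uniform-in-$n$ claim must cover, so the propagation of $\beta^{*},p^{*}$ needs additional care there as well.
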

\begin{proof}
Suppose for a contradiction that there is some $k$ such that $\overline{\eta}_k = 0$. It follows that $\lim_{\epsilon \to 0} \overline{\eta}_k^{(\epsilon)} = 0$. Recall that $a^{(\epsilon)}$ is an optimal control for the fixed background hash-rate $\eta^{(\epsilon)}$. For a given $\epsilon$, let $\nu(\epsilon) \coloneqq \epsilon \lor \overline{\eta}_k^{(\epsilon)} $, and define on the same probability space a new control process given by
\begin{align}
\tilde{a}_j^{(\epsilon)} =
\begin{cases}
a_j^{(\epsilon)} & j \neq k  \\
\sqrt{\nu(\epsilon)} \lor a_j^{(\epsilon)} & j = k,
\end{cases}
\end{align}
for $j = 0,1,\cdots,2^nT-1$. We will complete the proof by showing that for sufficiently small $\epsilon$, $\tilde{a}^{(\epsilon)}$ results in higher expected wealth utility than $a^{(\epsilon)}$. We only need to compare the controls at time $k$. The expected rate of the Bernouli reward at time $k$ of the original control satisfies
\begin{align}
E \Big[ \frac{1}{2^n} \lambda^{(\epsilon)}(a_k^{(\epsilon)},\overline{\eta}_k^{(\epsilon)}) \Big] \leq \frac{1}{2^n}  \lambda^{(\epsilon)}(E[a_k^{(\epsilon)}],\overline{\eta}_k^{(\epsilon)})dt = \frac{\overline{\eta}_k^{(\epsilon)}}{2^n((M+1) \overline{\eta}_k^{(\epsilon)} + \epsilon)} \leq \frac{1}{2^n(M+1)}.
\end{align}
where we have used MFG consistency in the equality. On the other hand, we also have that
\begin{align}
E \Big[ \frac{1}{2^n} \lambda^{(\epsilon)}(\tilde{a}_k^{(\epsilon)},\overline{\eta}_k^{(\epsilon)}) \Big] \geq  \frac{\sqrt{\nu(\epsilon)}}{2^n(\sqrt{\nu(\epsilon)} + \overline{\eta}_k^{(\epsilon)}M + \epsilon)} \geq \frac{\sqrt{\nu(\epsilon)}}{2^n(\sqrt{\nu(\epsilon)} + \nu(\epsilon)(M + 1))}  \xrightarrow{\epsilon \to 0} 2^{-n}. 
\end{align}
Observe that the difference in costs between the two controls converge as $\epsilon \to 0$. Also, Assumption \ref{mildassumption} implies that uniformly over $\epsilon \geq 0$, the choice of control processes (since the intensity is bounded), and the possible values of $k$, there is some positive probability that the wealth $x_k^{(n,\epsilon)}$ has not reached $ \argmax_{y \in \R} \phi(y)$. As such, taking $\epsilon \to 0$ results in a converging of the expected costs of the controls $\tilde{a}^{\epsilon}$ and $a^{\epsilon}$, whereas the number of rewards of the former is strictly higher than the latter, with the difference not converging as $\epsilon \to 0$. As such, the former control will eventually result in strictly higher expected terminal wealth utility compared to the latter for $\epsilon$ sufficiently small. This contradicts optimality of $a^\epsilon$ for the fixed sequence $\eta^\epsilon$, completing the proof. 

To prove the last statement regarding the uniformity of $d$ over the choice of $n$, we make the dependency on $n$ explicit. Note that the positive probability that the wealth $x_k^{(n,\epsilon)}$ has not reached the supremum of the terminal wealth utility is itself uniform in $n$; this follows using the Poisson limit theorem. Also, the dependency of $n$ on the bound 
\begin{align}
\liminf_{\epsilon \to 0} \Big( E \Big[ \frac{1}{2^n} \lambda^{(\epsilon)}(\tilde{a}_k^{(n,\epsilon)},\overline{\eta}_k^{(n,\epsilon)}) \Big] - E \Big[ \frac{1}{2^n} \lambda^{(\epsilon)}(a_k^{(n,\epsilon)},\overline{\eta}_k^{(n,\epsilon)}) \Big] \Big) \geq \frac{M}{2^n(M+1)}
\end{align}
is offset by the fact that as $n$ increases, the difference in cost of the controls $a_k^{(n,\epsilon)}$ and $\tilde{a}_k^{(n,\epsilon)}$ converges to zero faster (also with a $1/2^n$ factor) as $\epsilon \to 0$.
\end{proof}

We are now ready to establish existence of an MFG equilibrium as the limit of equilibria for the model parametrized by $\epsilon$ for a fixed $n \in \N$. 
\begin{proposition}
Recall that ($n \in \N$ is fixed and suppressed from the notation) $\nu^{(\epsilon)}$ denotes (the laws of) an MFG equilibrium for the model parametrized by $\epsilon > 0$ and $\nu$ is a weak limit as $\epsilon \to 0$. This object constitutes an MFG equilibrium for the model with $\epsilon = 0$. 
\end{proposition}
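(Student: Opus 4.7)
The plan is to verify the two defining conditions of the $\epsilon=0$ MFG equilibrium for the weak limit $\nu$: consistency of the state marginals under the limit transition kernel $\rho^{(0)}$, and optimality of the disintegrated Markov policy against the limit control flow $\eta$. Because $\mu_0$ is compactly supported, the action set $[0,L]$ is compact, and the horizon $T2^n$ is finite, wealth remains in a fixed compact interval, so the laws $\nu^{(\epsilon)}_k$ are tight and weak sub-sequential limits are available; I pass to such a sub-sequence throughout.

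For consistency, I would fix $k$ and a bounded continuous $f:\sX\to\R$ and pass to the limit in
\begin{align*}
\int_{\sX} f(y)\, \nu_{k+1,1}^{(\epsilon)}(dy) \;=\; \int_{\sX\times\sA}\int_{\sX} f(y)\,\rho^{(\epsilon)}(dy\,|\,x,a,\nu_k^{(\epsilon)})\,\nu_k^{(\epsilon)}(dx,da).
\end{align*}
The kernel $\rho^{(\epsilon)}$ is the two-point measure with weights determined by $\lambda^{(\epsilon)}(a,\overline{\eta}_k^{(\epsilon)})/2^n$. By Proposition~\ref{tech_lemm_d}, $\overline{\eta}_k^{(\epsilon)}$ and its limit $\overline{\eta}_k$ stay uniformly above some $d>0$; on $\{h\geq d\}$ the bound $|\lambda^{(\epsilon)}(a,h)-\lambda(a,h)| \leq L\epsilon/(dM)^2$ gives uniform convergence of $\lambda^{(\epsilon)}(\cdot,\overline{\eta}_k^{(\epsilon)})$ to $\lambda(\cdot,\overline{\eta}_k)$ on $[0,L]$. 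Both sides then converge to the analogous identity with $\rho^{(0)}$, and the initial condition $\nu_{0,1}=\mu_0$ is automatic.

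For optimality, let $V_k^{(\epsilon)}$ and $V_k$ denote the value functions of the reference agent at level $\epsilon$ and $0$ for the fixed flows $\eta^{(\epsilon)}$ and $\eta$, respectively. Backward induction from $V_{T2^n}^{(\epsilon)}=\phi=V_{T2^n}$, combined with continuity of $\phi$, the uniform convergence of intensities noted above, and stability of $\max$ under uniform convergence of continuous integrands on the compact set $[0,L]$, yields $V_k^{(\epsilon)}\to V_k$ uniformly on any compact wealth interval. The condition defining $B(\nu^{(\epsilon)})$ places $\nu_k^{(\epsilon)}$ on the set of Bellman maximizers for the $\epsilon$-problem; since the Bellman integrand converges uniformly to that of the $0$-problem, any limit point $(x,a) \in \operatorname{supp}(\nu_k)$ of a sequence of maximizers at level $\epsilon$ must itself be a Bellman maximizer at level $0$. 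A portmanteau argument then gives $\nu_k(B_k)=1$ for the $\epsilon=0$ maximizer set $B_k$, and Lemma~\ref{lem:Nash} promotes the disintegration of $\nu$ to an MFG equilibrium for the unregularized model.

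The main obstacle is that $\lambda$ is discontinuous at $(0,0)$, so $\rho^{(0)}$ fails Assumption~\ref{ass1}(iv) along control flows that charge zero; this is precisely why Theorem~\ref{mainexistenceth} cannot be invoked directly at $\epsilon=0$, and why the regularization is needed in the first place. Proposition~\ref{tech_lemm_d} is tailored to circumvent exactly this: it confines the limit flow $\eta$ (and the approximants $\eta^{(\epsilon)}$) to $\{h\geq d\}$, the region on which $\lambda^{(\epsilon)} \to \lambda$ uniformly, making every continuity argument above go through on the chosen sub-sequence. Once that subtlety is handled, the remainder is standard weak-convergence bookkeeping on a finite horizon.
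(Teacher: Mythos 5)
Your proof is correct, and while the consistency half is essentially the paper's (the paper simply remarks that consistency passes to the limit because it holds for each $\epsilon>0$; you spell out the forward-equation limit using the uniform bound $|\lambda^{(\epsilon)}(a,h)-\lambda(a,h)|\leq L\epsilon/(dM)^2$ on $\{h\geq d\}$, which is exactly the right estimate), your optimality argument takes a genuinely different route. The paper argues by contradiction at the level of policies: if some $\tilde\pi$ strictly outperformed the limit policy against the limit flow $\eta$, then by continuity of the terminal-utility functional and the convergence $\overline{\eta}_k^{(\epsilon)}\to\overline{\eta}_k>0$, the same $\tilde\pi$ would strictly outperform the $\epsilon$-equilibrium policy against $\eta^{(\epsilon)}$ for small $\epsilon$, contradicting $\epsilon$-optimality. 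You instead prove uniform-on-compacts convergence of the value functions $V_k^{(\epsilon)}\to V_k$ by backward induction, use the fixed-point characterization $\nu_k^{(\epsilon)}(B_k^{(\epsilon)})=1$ from the set $B(\cdot)$ in Section~\ref{finhorsec}, and pass the support condition to the limit before invoking Lemma~\ref{lem:Nash}. Your route buys a quantitative statement (value-function convergence) and stays entirely inside the fixed-point machinery of Section~\ref{sec:discrete}; the paper's route is shorter and avoids having to argue that the limit occupation measures concentrate on the $\epsilon=0$ Bellman maximizers. Two small points to tighten: Proposition~\ref{tech_lemm_d} bounds the \emph{limit} means $\overline{\eta}_k>d$, so the uniform lower bound on $\overline{\eta}_k^{(\epsilon)}$ you use should be stated as holding for all $\epsilon$ small enough (which follows from $\overline{\eta}_k^{(\epsilon)}\to\overline{\eta}_k$ over the finitely many $k$); and for the final support step the cleanest argument is to integrate the nonnegative gap $g^{(\epsilon)}(x,a)\coloneqq T_k^{(\epsilon)}V_{k+1}^{(\epsilon)}(x)-\bigl(c+\int V_{k+1}^{(\epsilon)}\,d\rho^{(\epsilon)}\bigr)(x,a)$ against $\nu_k^{(\epsilon)}$ and pass to the limit, rather than a pointwise limit-of-maximizers argument, since the latter requires care when the maximizer correspondence is not single-valued.
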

\begin{proof}
We have shown that none of the measures in the sequence $(\eta_k)_{k=0}^{2^nT}$ are Dirac measures at zero. Let $a_k \sim \pi_k(\cdot|x_k)$ where $\nu_k(da,dx) = \pi_k(da|x)\nu_{k,1}(dx)$. Using the continuity and boundedness of the map
\begin{align}
 (\sX \times \sA)^{2^nT + 1} \ni (x_k,a_k)_{k=0}^{2^nT} \mapsto \phi(x_{2^nT}  ) \in \R,
\end{align}
and the weak convergence $\nu^{(\epsilon)} \xrightarrow{\epsilon \to 0} \nu$, it follows that $E[\phi(x_{2^nT}^{(\epsilon)})] \xrightarrow{\epsilon \to 0} E[\phi(x_{2^nT})]$. Suppose now for a contradiction that there exists a control policy $\tilde{\pi} = (\tilde{\pi}_k)_{k=0}^{2^nT - 1}$ which outperforms $a$ under the fixed background sequence $(\eta_k)_{k=0}^{2^nT - 1}$. Let $\tilde{x}^{(\epsilon,\eta^{(\epsilon)})}$ and $\tilde{a}^{(\epsilon,\eta^{(\epsilon)})}$ denote (each possibly defined on its own probability space) the resulting control and state processes under the policy $\tilde{\pi}$ using the reward probability function $\lambda^{(\epsilon)}$ and the fixed sequence of measures $\eta^{(\epsilon)}$. Using the fact that $\overline{\eta}_k^{(\epsilon)} \xrightarrow{\epsilon \to 0} \overline{\eta}_k > 0$ for every $k = 0,\cdots,2^nT$, one obtains that $E[\phi(\tilde{x}^{(\epsilon,\eta^{(\epsilon)})}_{2^nT})] \xrightarrow{\epsilon \to 0} E[\phi(\tilde{x}^{(0,\eta)}_{2^nT})] > E[\phi(x_{2^nT})]$ which, for all $\epsilon > 0$ sufficiently small, contradicts the optimality of the control policy obtained by disintegration of $\nu^{(\epsilon)}$ for the fixed background flow $\eta^{(\epsilon)}$ and probability reward function $\lambda^{(\epsilon)}$. Note that consistency of the limit measures follows because it holds for every $\epsilon > 0$. 
\end{proof}
To summarize, we have shown that for any $n \in \N$ there exists an MFG equilibrium for the discrete-time game with $\epsilon = 0$. Moreover, the population control distributions are bounded away form zero, uniformly over the time step and the parameter $n \in \N$. Before moving to continuous-time, we solve the discrete-time game numerically. 

\subsection{Numerical Computation of Discrete-Time MFG} 

We solve the discrete-time MFG for parameters $n = 1$, $T = 300$, $M = 1000$ (players), take the terminal utility to be a CRRA utility $\phi(x) = 2 x^{1/2}$, and assume normally distributed initial wealth. The population hash-rate is initizlied to be constant in time, and the following steps are then iterated until convergence of the population hash-rate is observed:
\begin{enumerate}[noitemsep]
\item Compute the optimal control policy using dynamic programming, given the fixed background hash-rate from the previous step.
\item Compute the resulting hash-rate using the initial state law and the optimal control policy from step 1.
\item Update the new population hash-rate as a convex combination of that obtained in the last two steps. 
\end{enumerate}
Step 3 is a damping step which is required to attain convergence. The damping factor can be tuned to find a trade-off between convergence and speed. Solving with a damping factor of 0.9, we observe convergence from any choice of initial constant population hash-rate, and obtain numerical wealth distribution dynamics and optimal control policies for the reference agent (plotted below). 

The results obtained are qualitatively similar to those obtained from the numerical PDE approach in \cite{li2019mean}, where we see that agents drop out of the game if their wealth falls below a (time-dependent) threshold and observe the preferential attachment phenomenon where a small percentage of the population becomes increasingly wealthy, with the majority of miners eventually dropping out of the game with comparatively little wealth. Figures \ref{fig:enter-label} and \ref{fig:enter-label2} illustrate the discrete-time convergence results. Convergence speed is highly dependent on the choice of initial condition, in part due to the use of a large damping factor. The stability of the algorithm suggests uniqueness of the equilibria.

\begin{figure}[htbp]
    \centering
    \includegraphics[width=0.9\linewidth]{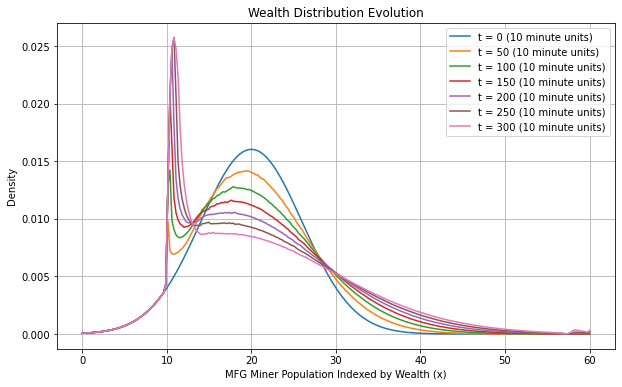}
    \caption{Evolution of Wealth Distribution}
    \label{fig:enter-label}
\end{figure}
\begin{figure}[htb]
    \centering
    \includegraphics[width=0.9\linewidth]{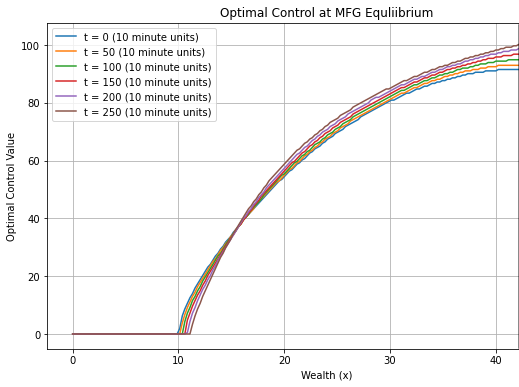}
    \caption{Optimal Control at Equilibrium}
    \label{fig:enter-label2}
\end{figure}

\subsection{Continuous Time Existence}

To apply the convergence result Theorem \ref{main_conv_th} (and Theorem \ref{mainexistenceth} for the discrete-time games), we again work with the intensity map $\lambda^{(\epsilon)}$ defined in \eqref{regularized_intensity}. Taking $\sU = [0,L]$ and assuming the terminal wealth utility is continuous and bounded and that the initial state law is compact, it is straightforward to check that for $\epsilon > 0$, Theorem \ref{main_conv_th} applies, guaranteeing the existence of a relaxed MFG equilibrium (of {\em a priori} relaxed controls) for the continuous-time game. As in the discrete-time case, we turn to the problem of extracting a limit for the original model by taking $\epsilon \to 0$. First, we note the following:
\begin{proposition}
If the terminal wealth utility $\phi$ is strictly increasing, any MFG equilibrium for the continuous-time cryptocurrency model is of sharp controls. 
\end{proposition}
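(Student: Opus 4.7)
The plan is to imitate the sharpness argument already used in the proof of Theorem \ref{main_conv_th}, leveraging Jensen's inequality together with the strict concavity of $\alpha \mapsto \lambda(\alpha,h) = \alpha/(\alpha+hM)$ for $h>0$, and the strict monotonicity of $\phi$. First I would fix an arbitrary relaxed MFG equilibrium $(m,\eta)$ for the continuous-time cryptocurrency model and introduce the sharp candidate $a^*_t(\omega) \coloneqq \int_{\sU}\alpha\,m_t(\omega)(d\alpha)$. A direct computation gives $\partial_\alpha^2 \lambda(\alpha,h) = -2hM/(\alpha+hM)^3$, which is strictly negative for every $h>0$, so by Jensen's inequality,
\[
\int_{\sU}\lambda(\alpha,h)\,m_t(d\alpha) \leq \lambda(a^*_t,h),
\]
with strict inequality whenever $m_t$ is not a Dirac mass. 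Integrating against $\eta_t$ then yields
\[
\int_{\sU}\!\int_{\sU}\lambda(\alpha,h)\,m_t(d\alpha)\eta_t(dh) \leq \int_{\sU}\lambda(a^*_t,h)\,\eta_t(dh),
\]
with strict inequality whenever $m_t$ is non-Dirac \emph{and} $\eta_t$ charges $(0,L]$.

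Next I would dispatch the boundary case in which $\eta_t = \delta_{\{0\}}$, which is precisely where $\lambda(\cdot,0)$ fails to be strictly concave. On any such $t$, the MFG consistency requirement $E[\overline{m}_t] = \overline{\eta}_t = 0$, together with $m_t$ being supported on $[0,L]$, forces $m_t(\omega) = \delta_{\{0\}}$ for $P$-almost every $\omega$. Thus $m_t$ is automatically sharp on $\{t : \eta_t = \delta_{\{0\}}\}$, neatly bypassing the loss of strict concavity at $h=0$ without needing to rule out this degenerate case directly.

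I would then finish by contradiction. Assume $m_t$ fails to be a Dirac on a set of positive $dP\otimes dt$ measure; by the previous step, this must occur on $\{t : \eta_t((0,L])>0\}$, so the integrated inequality above is strict on a set of positive measure. Define the sharp alternative $m^*_t \coloneqq \delta_{\{a^*_t\}}$, which has the same mean as $m_t$ and hence incurs exactly the same drift cost $c\int_0^T a^*_s\,ds$, while its associated stochastic jump intensity dominates that of $m$ pointwise and is strictly larger on that positive-measure set. A standard coupling of counting processes with ordered predictable intensities (e.g., via thinning) yields $N^{m^*}_T \geq N^m_T$ almost surely with $P(N^{m^*}_T > N^m_T) > 0$. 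Since $X_T = X_0 - c\int_0^T a^*_s\,ds + rN_T$ is pathwise strictly increasing in $N_T$ and $\phi$ is strictly increasing, $E[\phi(X^{m^*}_T)] > E[\phi(X^m_T)]$, contradicting optimality of $m$.

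The main obstacle is really the boundary case at $h=0$: unlike the hypothesis of Theorem \ref{main_conv_th}, the cryptocurrency intensity is only strictly concave on $h>0$, so one cannot directly invoke that theorem. The resolution is to let the MFG consistency condition do the work of forcing sharpness where Jensen cannot, and to invoke a coupling comparison to transfer the intensity inequality into a pathwise jump-count inequality; both are routine but require the correct bookkeeping.
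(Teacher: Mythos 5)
Your proposal is correct, and its core is the same Jensen/strict-concavity comparison that the paper uses: the paper's proof is literally the one-line citation ``follows from the second assertion of Theorem~\ref{main_conv_th},'' whose proof is exactly your first display (replace $m_t$ by the Dirac at its mean $a_t^*$, keep the drift cost fixed, and use strict monotonicity of $\phi$ to contradict optimality unless $m_t$ is a.e.\ Dirac). Where you genuinely add something is in noticing that this citation is not quite self-contained for the $\epsilon=0$ cryptocurrency intensity: Assumption~\ref{multiple_assumptions}.\ref{a5} requires $\lambda(\cdot,h)$ strictly concave for \emph{every} $h\in\sU$, whereas $\lambda(a,0)=\1_{\{a>0\}}$ is concave but not strictly concave (nor continuous), so the ``equality iff Dirac'' step of Theorem~\ref{main_conv_th} degenerates exactly when $\eta_t$ is concentrated at $0$. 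Your repair — on $\{t:\eta_t=\delta_{\{0\}}\}$ the consistency condition $E[\overline{m}_t]=\overline{\eta}_t=0$ together with $m_t$ being supported on $[0,L]$ forces $m_t=\delta_{\{0\}}$ a.s., so sharpness there is automatic, while on $\{t:\eta_t((0,L])>0\}$ the $h$-averaged intensity $a\mapsto\int\lambda(a,h)\eta_t(dh)$ inherits strict concavity and Jensen is strict — is exactly the right dichotomy and closes the gap. Your final thinning/coupling step also makes rigorous the comparison $E[\phi(X_T^{a^*})]\geq E[\phi(X_T^{m})]$ with strictness on a positive-measure non-Dirac set, which the paper asserts across possibly distinct probability spaces without detail. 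In short: same mechanism as the paper, but with two patches (the $h=0$ boundary case and the pathwise jump-count comparison) that the paper's appeal to Theorem~\ref{main_conv_th} leaves implicit.
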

%\clearpage
\begin{proof}
Follows from the second assertion of Theorem \ref{main_conv_th}.  
\end{proof}
Suppose now that for a given $\epsilon > 0$, $(X^{(\epsilon)},N^{(\epsilon)},m^{(\epsilon)},\eta^{(\epsilon)})$ is an MFG equilibrium with the state, jump, and control processes defined on a common probability space (which may vary with $\epsilon$). As in discrete-time, our aim is to extract a weak limit of process laws by taking $\epsilon \to 0$. Let $(X,N,m)$ denote processes (defined on a possibly distinct probability space) such that 
\begin{align} \label{final_mfg_eq}
\sL(X^{(\epsilon)},N^{(\epsilon)},m^{(\epsilon)}) \xrightarrow{\sL,\epsilon \to 0} \sL(X,N,m). 
\end{align}
We need to establish that $(X,N,m,\eta)$ is an MFG equilibrium for the cryptocurrency mining model with $\epsilon = 0$. Similarly to the discrete-time case, if the set $B \coloneqq \{t \in [0,T]: \eta_t = \eta_{\{0\}} \}$
has positive Lebesgue measure, then an optimal control for the $\epsilon = 0$ model with fixed control measure flow $(\eta_t)_{t \in [0,T]}$ will in general not exist. Under Assumption \ref{mildassumption}, we will see that $B$ has zero Lebesgue measure.

\begin{proposition}
The limit disintegration $(\eta_t)_{t \in [0,T] } \subseteq \sP([0,L])$ is not a Dirac at zero except for possibly on a Lebesgue null set.
\end{proposition}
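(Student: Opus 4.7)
The plan is to argue by contradiction, closely mirroring the strategy of Proposition \ref{tech_lemm_d} from the discrete-time setting. Suppose that the set $B \coloneqq \{t \in [0,T] : \eta_t = \delta_{\{0\}}\}$ has positive Lebesgue measure. Testing the weak convergence $\eta^{(\epsilon)} \xrightarrow{\sL} \eta$ on $[0,T] \times \sU$ against a continuous approximation of $(t,h)\mapsto \mathbf{1}_B(t)\,h$, one immediately obtains $\int_B \overline{\eta}_t^{(\epsilon)}\,dt \to 0$ as $\epsilon \to 0$.

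Next I would extract, via Markov's inequality combined with a diagonalization on $\epsilon$, a subsequence $\epsilon_n \to 0$ and a measurable subset $B' \subseteq B$ of positive measure such that $\overline{\eta}_t^{(\epsilon_n)} \to 0$ pointwise on $B'$ and $\eta_t^{(\epsilon_n)} \xrightarrow{\sL} \delta_{\{0\}}$ on $\sU$ for a.e.\ $t \in B'$ (the latter following from the $L^1$ decay of the means together with disintegration, after a further subsequence if needed). On this subsequence I would construct a perturbed relaxed control $\tilde m^{(\epsilon_n)}$ agreeing with $m^{(\epsilon_n)}$ off $B'$ and equal to the sharp control $\delta_{\{\beta_n(t)\}}$ on $B'$, with the pointwise choice $\beta_n(t) \coloneqq \sqrt{\epsilon_n \vee \overline{\eta}_t^{(\epsilon_n)}}$, so that $\beta_n(t) \to 0$ while $\epsilon_n / \beta_n(t) \to 0$ and $\overline{\eta}_t^{(\epsilon_n)}/\beta_n(t) \to 0$ a.e.\ on $B'$. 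The associated perturbed jump process $\tilde N^{(\epsilon_n)}$ would be defined on a possibly enlarged probability space per Remark \ref{weak_well_posedness}.

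The decisive comparison is between perturbed and original expected intensities on $B'$. For the perturbation, Jensen's inequality applied to the convex map $h \mapsto \beta_n(t)/(\beta_n(t) + hM + \epsilon_n)$ yields
\[
\int \lambda^{(\epsilon_n)}(\beta_n(t), h)\,\eta_t^{(\epsilon_n)}(dh) \ \geq\ \frac{\beta_n(t)}{\beta_n(t) + \overline{\eta}_t^{(\epsilon_n)} M + \epsilon_n} \ \xrightarrow{n \to \infty}\ 1
\]
for a.e.\ $t \in B'$. For the original, two applications of Jensen's inequality (concavity of $a \mapsto \lambda^{(\epsilon_n)}(a,h)$ and of $x \mapsto x/(x+hM+\epsilon_n)$) together with MFG consistency $E[\overline{m}_t^{(\epsilon_n)}] = \overline{\eta}_t^{(\epsilon_n)}$ bound the expected intensity above by $\int \lambda^{(\epsilon_n)}(\overline{\eta}_t^{(\epsilon_n)},h)\,\eta_t^{(\epsilon_n)}(dh)$, which combined with the weak concentration $\eta_t^{(\epsilon_n)} \to \delta_{\{0\}}$ is strictly smaller than the perturbed lower bound. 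Integrating over $B'$ with dominated convergence, the perturbation gains a bounded-below fraction of extra expected Poisson jumps, while the extra running cost $c\int_{B'}\beta_n(t)\,dt$ vanishes.

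Finally, Assumption \ref{mildassumption} supplies a uniform-in-$n$ positive lower bound on the probability that the reference agent's wealth at time $T$ lies strictly below $\argmax \phi$; since $\phi$ is non-decreasing, this converts the positive expected gain in Poisson count into a uniformly-in-$n$ positive gap $E[\phi(\tilde X^{(\epsilon_n)}_T)] - E[\phi(X^{(\epsilon_n)}_T)] > 0$ for $n$ large, contradicting optimality of $m^{(\epsilon_n)}$ against $\eta^{(\epsilon_n)}$ in the $\epsilon_n$-regularized MFG. I expect the main obstacle to be the joint calibration of $\beta_n(t)$ against $\epsilon_n$ and $\overline{\eta}_t^{(\epsilon_n)}$ so that the perturbed intensity lower bound approaches one while the upper bound on the original intensity is effectively vanishing on $B'$; a case split on whether $\overline{\eta}_t^{(\epsilon_n)}$ decays faster or slower than $\epsilon_n$, together with a sub-subsequence extraction, should resolve this, but it is the principal technical wrinkle beyond the discrete argument.
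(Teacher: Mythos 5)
Your overall strategy---rerunning the perturbation/contradiction argument of Proposition \ref{tech_lemm_d} directly at the level of the continuous-time $\epsilon$-regularized equilibria---differs from the paper's, which simply invokes the fact that the lower bound $d>0$ in Proposition \ref{tech_lemm_d} is uniform over $n\in\N$ (that uniformity clause is stated there precisely for this purpose), so that $\overline{\eta}^{(\epsilon)}_t\geq d$ survives the $n\to\infty$ interpolation and then the $\epsilon\to 0$ limit with nothing further to prove. Your preliminary steps (showing $\int_B\overline{\eta}_t^{(\epsilon)}\,dt\to 0$, extracting a.e.\ convergence on a positive-measure $B'$, and the Jensen lower bound $\int\lambda^{(\epsilon_n)}(\beta_n(t),h)\,\eta_t^{(\epsilon_n)}(dh)\geq\beta_n(t)/(\beta_n(t)+\overline{\eta}_t^{(\epsilon_n)}M+\epsilon_n)\to 1$ for the perturbed control) are correct.

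The gap is at your ``decisive comparison.'' Your upper bound on the original expected intensity is $\int\lambda^{(\epsilon_n)}(\overline{\eta}_t^{(\epsilon_n)},h)\,\eta_t^{(\epsilon_n)}(dh)$, and you assert that the weak concentration $\eta_t^{(\epsilon_n)}\to\delta_{\{0\}}$ forces this to sit strictly below the perturbed lower bound. It does not: $h\mapsto\lambda^{(\epsilon)}(a,h)$ is convex and \emph{decreasing}, so Jensen runs the wrong way in the $h$ variable, and concentration of $\eta_t^{(\epsilon_n)}$ near $0$ makes this integral \emph{larger}, not smaller. Concretely, take $\eta_t^{(\epsilon_n)}=(1-u_n/L)\,\delta_{\{0\}}+(u_n/L)\,\delta_{\{L\}}$ with $u_n\coloneqq\overline{\eta}_t^{(\epsilon_n)}\to 0$ but $u_n\gg\epsilon_n$: this has mean $u_n$, converges weakly to $\delta_{\{0\}}$, and yet
\begin{equation}
\int\lambda^{(\epsilon_n)}(u_n,h)\,\eta_t^{(\epsilon_n)}(dh)\;\geq\;\Big(1-\frac{u_n}{L}\Big)\frac{u_n}{u_n+\epsilon_n}\;\xrightarrow{n\to\infty}\;1,
\end{equation}
the same limit as your perturbed lower bound. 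The intensity gap between $\tilde m^{(\epsilon_n)}$ and $m^{(\epsilon_n)}$ can therefore vanish, and no contradiction with optimality follows. The discrete-time proof you are mirroring avoids this because Scheme 2 feeds the \emph{scalar} mean into $\lambda$, giving $\lambda^{(\epsilon)}(\overline{\eta}_k,\overline{\eta}_k)\leq 1/(M+1)$; in the relaxed continuous-time intensity $\int\int\lambda(a,h)\,m_t(da)\,\eta_t(dh)$ that collapse is only available if you first prove $\eta_t^{(\epsilon_n)}$ is a.e.\ a Dirac, i.e.\ sharpness of the background flow, which the paper does not establish (and explicitly defers in Section \ref{sec:uniq}). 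Your anticipated fix---a case split on the decay rates of $\overline{\eta}_t^{(\epsilon_n)}$ versus $\epsilon_n$---targets the wrong quantity: the obstruction is the shape of $\eta_t^{(\epsilon_n)}$, not its mean. The clean repair is the paper's route: import the $n$-uniform bound $d$ from Proposition \ref{tech_lemm_d} and pass it through the two limits.
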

\begin{proof}
Recall that the object $\eta$ has been obtained by, for each $\epsilon$, interpolating and then embedding $\overline{\eta}^{(n,\epsilon)} \in [0,L]^{n+1}$ into a space of measures, and taking $n \to \infty$ to obtain $\eta^{(\epsilon)} \in \sP([0,T] \times [0,L])$. Then, we take $\epsilon \to 0$ to obtain $\eta$. By Proposition \ref{tech_lemm_d}, however, we know that there is some $d > 0$ such that $\overline{\eta}_t^\epsilon > 0$ for every $t \in [0,T]$ and every $\epsilon$ sufficiently small, from which the result follows. 
\end{proof}
We conclude with the following proposition. 
\begin{proposition}
The tuple $(X,N,m,\eta)$ defined above \eqref{final_mfg_eq} constitutes an  MFG equilibrium for the cryptocurrency mining model with $\epsilon = 0$. The equilibrium is relaxed in the sense of Definition \ref{cont_mfg_def}, but is of sharp control.
\end{proposition}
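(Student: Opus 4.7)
The plan is to perform a passage to the limit $\epsilon \to 0$ on top of the construction from Theorem \ref{main_conv_th}, following the same architecture as the discrete-time $\epsilon \to 0$ argument above. The crucial input is the preceding proposition: for Lebesgue-a.e.\ $t$ the flow $\eta_t$ is not a Dirac at $0$, and in fact $\overline{\eta}_t^{(\epsilon)} \geq d > 0$ uniformly in sufficiently small $\epsilon$. This is what prevents the discontinuity of $\lambda^{(0)}(a,h) = (a/(a+hM))\1_{\{a > 0\}}$ at $(0,0)$ from causing trouble: on the relevant domain $[0,L]\times[d/2,L]$, $\lambda^{(\epsilon)}$ converges uniformly to $\lambda^{(0)}$.

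First I would invoke the Skorokhod representation theorem to upgrade the weak convergence in \eqref{final_mfg_eq} to almost sure convergence on a common probability space, along a subsequence that also makes $\eta^{(\epsilon)} \to \eta$. Next I would verify that $N$ admits the stochastic intensity $\int\!\!\int \lambda^{(0)}(a,h) m_t(da)\eta_t(dh)$: starting from the Watanabe-type martingale identity for each $N^{(\epsilon)}$ and passing to the limit in conditional expectations against $\sF_s$-measurable bounded test functions, the compensators converge by combining $\lambda^{(\epsilon)} \leq 1$, uniform convergence on $\{h \geq d/2\}$, and the almost-sure weak convergence $m^{(\epsilon)}_\rho \to m_\rho$ and $\eta^{(\epsilon)}_\rho \to \eta_\rho$. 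Consistency $E[\overline{m}_t] = \overline{\eta}_t$ for Lebesgue-a.e.\ $t$ follows essentially verbatim from Step 3 of Theorem \ref{main_conv_th}, using $E[\overline{m}_t^{(\epsilon)}] = \overline{\eta}_t^{(\epsilon)}$ at each $\epsilon$-level.

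The main step is optimality of $m$ against $\eta$ under intensity $\lambda^{(0)}$. Suppose for contradiction there is some admissible relaxed control $\tilde m$ on a space $(\tilde{\Omega},\tilde{\sF},\tilde{P})$ with associated state process $\tilde{X}$ satisfying $\tilde{E}[\phi(\tilde{X}_T)] > E[\phi(X_T)]$. By the Chattering Lemma \ref{chatlemma} we may assume $\tilde m$ is piecewise constant on dyadic rationals of some fixed order. For each $\epsilon > 0$ I would then build $(\tilde{X}^{(\epsilon)}, \tilde{N}^{(\epsilon)})$ driven by $\tilde m$ under intensity $\lambda^{(\epsilon)}$ and flow $\eta^{(\epsilon)}$. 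The uniform bound $\overline{\eta}_t^{(\epsilon)} \geq d$, together with $\eta^{(\epsilon)} \to \eta$ and uniform convergence of $\lambda^{(\epsilon)}$ to $\lambda^{(0)}$ on $[0,L] \times [d/2,L]$, gives convergence of the compensators of $\tilde{N}^{(\epsilon)}$ to those of $\tilde{N}$ and hence, by boundedness and continuity of $\phi$, $\tilde{E}[\phi(\tilde{X}_T^{(\epsilon)})] \to \tilde{E}[\phi(\tilde{X}_T)]$. Meanwhile $E[\phi(X_T^{(\epsilon)})] \to E[\phi(X_T)]$ by the weak convergence in hand. For $\epsilon$ small this would yield $\tilde{E}[\phi(\tilde{X}_T^{(\epsilon)})] > E[\phi(X_T^{(\epsilon)})]$, contradicting the $\epsilon$-level MFG optimality of $m^{(\epsilon)}$. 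I expect the main obstacle to be carefully handling the exceptional time-set $\{t: \overline{\eta}_t^{(\epsilon)} < d/2\}$, whose Lebesgue measure vanishes as $\epsilon \to 0$ by consistency and the uniform bound, so that its contribution to the compensator integrals is negligible.

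Finally, sharpness is a direct consequence of the second assertion of Theorem \ref{main_conv_th}: $\phi$ is strictly increasing by hypothesis and $\lambda^{(0)}(\cdot,h) = a/(a+hM)$ is strictly concave in $a$ for each $h > 0$, so replacing $m_t$ by $\delta_{\{\int a\, m_t(da)\}}$ strictly raises the expected terminal reward unless $m_t$ is already Dirac for Lebesgue-a.e.\ $t$ almost surely; the optimality of $m$ just established then forces the sharp-control conclusion.
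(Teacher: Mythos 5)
Your proposal follows essentially the same route as the paper: both rest on the uniform lower bound $\overline{\eta}_t \geq d$ from the preceding proposition to neutralize the discontinuity of $\lambda^{(0)}$ at the origin, establish optimality by assuming a better control $\tilde m$ exists, constructing the corresponding $\epsilon$-level processes $\tilde N^{(\epsilon)}$, passing to the limit in the expected terminal utility via continuity and boundedness of the terminal-cost map, and contradicting the $\epsilon$-level optimality of $m^{(\epsilon)}$; consistency and sharpness are likewise obtained exactly as you describe. Your version simply spells out a few steps the paper leaves implicit (the Skorokhod representation, the Watanabe martingale verification of the limit intensity, and the chattering reduction), which is fine but not a different argument.
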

\begin{proof}
We have shown that the limit hash-rate $\overline{\eta}_t \geq d$ for every $t \in [0,T]$. We now show that the control $m$ is optimal for the fixed measure flow $\eta$. Note the continuity (note that continuity on $D([0,T])$ is with respect to the Skorokhod topology for which the evaluation map at the endpoint $T$ is continuous), and boundedness of the map 
\begin{align} \label{cont_b_map}
D([0,T]) \times \sP([0,T] \times [0,L]) \ni (N,m) \mapsto \phi \Big(rN_T - c \int_{0}^{T} a m(dt,da) \Big). 
\end{align}

Suppose now for a contradiction that there exists a measure-valued control process $\tilde{m}$ which outperforms $m$ under the fixed background flow $\eta$ and intensity function $\lambda$. Then we have that (expectations taken on possibly different probability spaces)
\begin{align} \label{contrd}
E[\phi(rN_T - c \int_{0}^{T} m_tdt )] < E[\phi(r\tilde{N}_T^{(0)} - c \int_{0}^{T} \overline{\tilde{m}}_tdt )] = \lim_{\epsilon \to 0} E[\phi(r\tilde{N}_T^{(\epsilon)} - c \int_{0}^{T} \overline{\tilde{m}}_tdt )],
\end{align}
where $\tilde{N}^{(\epsilon)}$ denotes a unit jump process with stochastic intensity $\int \int \lambda^{(\epsilon)}(a,h) \tilde{m}_t(da) \eta_t^{(\epsilon)}(dh)$. The above limit is justified via the continuity and boundedness of the map in \eqref{cont_b_map} combined with the fact the fact that $\eta_t$ is never a Dirac at zero. For $\epsilon > 0$ sufficiently small, this contradicts the optimality of $m^{(\epsilon)}$ for the hash-rate $\eta^{(\epsilon)}$. As such, we conclude that $m$ is optimal for the hash-rate $\eta$ as desired. Consistency follows by continuity, taking limits, and using the fact that consistency holds for every $\epsilon > 0$, and we thus conclude that the tuple $(X,N,m,\eta)$ constitutes an  MFG equilibrium for the cryptocurrency mining model with $\epsilon = 0$. The equilibrium is relaxed in the sense of Definition \ref{cont_mfg_def}, but is of sharp control. 
\end{proof}
%\begin{remark}
\subsection{Uniqueness and Sharpness of MFG Equilibrium}\label{sec:uniq}
So far, we have not discussed the question of MFG uniqueness for the cryptocurrency MFG model. Numerically, one observes that, for reasonable wealth utilities, the same MFG solution is obtained from fixed-point iterations independently of starting conditions of the algorithm, suggesting uniqueness in these cases. Consider the discrete-time reference agent problem for a given $n \in \N$ with fixed population hash-rates $(\overline{\eta}_k^{(n)})_{k=0}^{2^nT}$ and assume an increasing and strictly concave utility function $v_{2^nT}^{(n)} = \phi$. Economic intuition suggests that strict concavity and monotonicity extend to the value functions at all times. Under strict concavity of the value functions, one can show that
\begin{align}
a_k^{(n)*}(x) = \argmax_{a \in [0,L]}\{E[v_{k+1}^{(n)}(x_{k+1}) | x_k^{(n)} = x, a_k^{(n)} = a ] \}  
\end{align}
is a singleton. This follows by observing that the objective function in the above maximization is strictly quasi-concave. Moreover, using  \cite[Theorem 1]{levin2003supermodular} one can see that the quantity $a_k^{(n)*}(x)$ is strictly decreasing in $\overline{\eta}_k^{(n)}$ everywhere except possibly on a small neighborhood of zero. Provided that one can show that any MFG equilibrium results in a population hash-rate that is not in this small neighborhood (which may follow in certain cases from arguments as in Proposition \ref{tech_lemm_d}), then uniqueness of equilibrium follows from a simple contradiction argument by assuming two distinct equilibria. Note also that uniqueness of equilibrium for each $n \in \N$ allows one to conclude that the continuous-time equilibrium hash rate $(\eta_t)_{t \in [0,T]}$ is in fact $[0,L]-$valued (and not $\sP([0,L])$ valued), that is, sharpness of MFG equilibrium. 
%\end{remark}

Because the question of establishing conditions for uniqueness and sharpness of the MFG equilibrium are specific to the wealth utility and parameter choices, we leave this for future work.

\section{Conclusion}
In this paper, we have accomplished three tasks. First, a general discrete-time MFG existence theorem was established, involving general transition dynamics with mean-field interactions via both the states and controls, and influencing both the transition dynamics and costs. Second, the discrete-time result was used to obtain relaxed MFG equilibria existence results for models of controlled jump intensity with mean-field interaction via the controls, and affecting the intensity of the jump processes. Finally, the results were applied to provide existence guarantees for a cryptocurrency mining MFG model, and an alternative numerical scheme, motivated by the discrete-time to continuous-time convergence result was implemented. This scheme was shown to coincide (in the sense of obtaining similar qualitative agent behaviour) with numerical solutions to the original continuous-time cryptocurrency mining MFG which was solved by numerically solving coupled Kolmogorov and HJB PDEs. 

\appendix
\section{Tightness Theorem}
The following result is restated here for convenience, and its proof can be found in \cite[Theorem 9.2.1]{kushner2001numerical}.
\begin{theorem} \label{tcritkushner}
(Tightness Criteria for the space $D$) Consider an arbitrary collection (possibly uncountable) of processes $\{X^{(\alpha)} : \alpha \in I\}$ taking values in the space $D^k[0,\infty)$ ( $\R^k$-valued cadlag functions on $[0,\infty)$) and defined on a common probability space $(\Omega,\sF,P)$. Assume that for each $\delta>0$ and rational $t \in [0,\infty) \cap \Q$ there exists a corresponding compact set $K_{\delta,t}$ such that
\begin{align*}
\sup_{\alpha \in I}P(X_t^{(\alpha)} \notin K_{\delta,t}) \leq \delta.
\end{align*}
Let now $\sF_t^{(\alpha)} \coloneqq \sigma\{X_s^{(\alpha)} : s \leq t\}$ and let $\sT_T^{(\alpha)}$ denote the set of $\sF_t^{(\alpha)}$ stopping times that are bounded by $T$. Suppose now that for each $T \in [0,\infty)$ we have that 
\begin{align*}
\lim_{\delta \to 0}\sup_{\alpha \in I} \sup_{\tau \in \sT_T^{(\alpha)}} E(\1 \land |X_{\tau + \delta}^{(\alpha)} - X_{\tau}^{(\alpha)}| ) = 0.
\end{align*}
Then the family of laws $\sL(X^{(\alpha)})_{\alpha \in I}$ is tight. 
\end{theorem}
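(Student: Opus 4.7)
The plan is to invoke the classical Aldous--Kurtz tightness criterion for càdlàg processes, after reducing to a bounded time interval. Because tightness of laws on $D^k[0,\infty)$ (endowed with the Skorokhod topology) is equivalent to tightness of the restricted laws on $D^k[0,T]$ for every $T \geq 0$ (using that the restriction map is continuous and that the topology is induced by uniform convergence on compact intervals modulo time changes), it suffices to fix an arbitrary $T > 0$ and prove tightness of $\{\sL(X^{(\alpha)}|_{[0,T]}) : \alpha \in I\}$ in $\sP(D^k[0,T])$. The extension to $[0,\infty)$ then follows by a diagonal argument applied to $T = 1, 2, 3, \ldots$.

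For a fixed $T$, the standard tightness criterion on $D^k[0,T]$ (cf.\ Billingsley, Theorem 16.10, or Ethier--Kurtz, Chapter 3, Theorem 7.2) asks for two conditions: first, that the one-dimensional marginals $\{\sL(X_t^{(\alpha)})\}_{\alpha \in I}$ are tight on $\R^k$ for each $t$ in a dense subset of $[0,T]$; and second, a control on the oscillation of sample paths over short intervals, which by Aldous's criterion can be replaced by a uniform bound over stopping times. The first condition is immediate from the compact containment hypothesis: for each rational $t$ and each $\delta > 0$, the compact set $K_{\delta,t}$ carries mass at least $1 - \delta$ uniformly in $\alpha$, so $\{\sL(X_t^{(\alpha)})\}$ is tight.

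For the oscillation condition, I would translate the hypothesis
\[
\lim_{\delta \to 0} \sup_{\alpha \in I} \sup_{\tau \in \sT_T^{(\alpha)}} E\bigl(\1 \wedge |X_{\tau+\delta}^{(\alpha)} - X_\tau^{(\alpha)}|\bigr) = 0
\]
into the usual Aldous form. For any $\epsilon \in (0,1)$, Markov's inequality applied to the bounded variable $\1 \wedge |X_{\tau+\delta}^{(\alpha)} - X_\tau^{(\alpha)}|$ gives
\[
P\bigl(|X_{\tau+\delta}^{(\alpha)} - X_\tau^{(\alpha)}| > \epsilon\bigr) \leq \frac{1}{\epsilon}\, E\bigl(\1 \wedge |X_{\tau+\delta}^{(\alpha)} - X_\tau^{(\alpha)}|\bigr),
\]
since on the event in question the truncated variable is at least $\epsilon$. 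Taking $\sup_\alpha \sup_\tau$ and sending $\delta \to 0$ yields Aldous's condition in the form
$\lim_{\delta \to 0}\sup_\alpha \sup_{\tau \in \sT_T^{(\alpha)}} P(|X_{\tau+\delta}^{(\alpha)} - X_\tau^{(\alpha)}| > \epsilon) = 0$ for every $\epsilon > 0$.

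The main obstacle is the step from the Aldous stopping-time condition to control of the Skorokhod modulus $w'(x, \delta, T) = \inf_{\{t_i\}} \max_i \sup_{s,r \in [t_{i-1}, t_i)} |x(s) - x(r)|$, where the infimum is over partitions with mesh at least $\delta$. This is the technical heart of the Aldous--Kurtz theorem: one constructs the partition witnessing smallness of $w'$ adaptively, using stopping times that mark the successive $\epsilon$-oscillations of the path, and then applies the hypothesis to bound the probability that too many such stopping times occur within $[0,T]$. Once $P(w'(X^{(\alpha)}, \delta, T) > \epsilon)$ is shown to vanish as $\delta \to 0$ uniformly in $\alpha$, combining with the compact containment via the standard characterization of compacts in $D^k[0,T]$ (relative compactness is equivalent to pointwise boundedness plus vanishing modulus $w'$) yields tightness of $\{\sL(X^{(\alpha)}|_{[0,T]})\}$, completing the proof.
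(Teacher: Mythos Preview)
The paper does not actually prove this theorem: it is stated in the appendix purely for convenience, with the proof deferred to \cite[Theorem 9.2.1]{kushner2001numerical}. Your sketch via the Aldous--Kurtz criterion (compact containment of marginals plus the stopping-time oscillation bound, converted to control of the Skorokhod modulus $w'$) is the standard route and is essentially what the cited reference does, so your proposal is correct and aligned with the intended argument.
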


\bibliography{references} 

\end{document}